\documentclass[english,russian,12pt]{amsart} 
\usepackage{nicefrac}
\usepackage[inline]{enumitem}
\usepackage{enumitem}

\usepackage{amssymb}
\usepackage[usenames,dvipsnames,svgnames,table]{xcolor}
\definecolor{myRed}{rgb}{0.9,0.,.2}
\definecolor{myBlue}{rgb}{0.,0.,.6}
\definecolor{myGreen}{rgb}{0.1,0.7,0.1}
\definecolor{myViolet}{rgb}{102,0,153}
\usepackage[colorlinks=true,urlcolor=myBlue,linkcolor=myBlue,citecolor=myGreen,pagebackref=true]{hyperref}

\usepackage{fouriernc}
\usepackage[scaled=0.775]{helvet}
\usepackage{courier}
\usepackage[marginparwidth=2cm]{geometry}
\geometry{hmargin=2.5cm, vmargin=2.5cm }
\usepackage{array,float}

\usepackage{todonotes}
\usepackage{tikz, tikz-3dplot}
\usepackage{tikzscale}
\usetikzlibrary{shapes,arrows}
\usetikzlibrary{fit,positioning}
\usetikzlibrary{patterns,decorations.pathreplacing}
\usepackage{xkeyval}
\usepackage{moreverb}
\usepackage{epic}
\usepgfmodule{shapes,plot,decorations}

\usepackage{booktabs} 

\usepackage[normalem]{ulem} 


\newcommand{\R}{\mathbb{R}}

\newcommand{\HH}{\mathbb{H}}
\newcommand{\EE}{\mathbb{E}}


\usepackage[warn]{mathtext}
\newcommand\El{\Lambda}

\newcommand{\Vol}{\mathrm{Vol}}

\usepackage{mathrsfs}

\theoremstyle{plain}
\newtheorem{theorem}{\indent Theorem}[section]

\newtheorem{proposition}[theorem]{\indent Proposition}
\newtheorem{corollary}[theorem]{\indent Corollary}
\newtheorem{lemma}[theorem]{\indent Lemma}



\renewenvironment{proof}
	{\par\indent{\bf Proof.}} 
	{\hfill$\scriptstyle\blacksquare$}

\makeatletter
  \def\section{\@startsection{section}{2}%
    {\z@}{.5\linespacing\@plus.7\linespacing}{.5em}%
    {\normalfont\bfseries\centering}}
\def\@secnumfont{\bfseries}
\makeatother

\theoremstyle{definition}
\newtheorem{example}[theorem]{Example}

\theoremstyle{remark}
\newtheorem{remark}[theorem]{Remark}

\usepackage{ulem}


\title{On volumes of hyperbolic right-angled polyhedra}

\author{Stepan Alexandrov}
\address{Dept. of Discrete Mathematics, Moscow Institute of Physics and Technology, Russia}
\email{aleksandrov.sa@phystech.edu}

\author{Nikolay Bogachev}
\address{The Kharkevich Institute for Information Transmission Problems} 
\address{Dept. of Discrete Mathematics, Moscow Institute of Physics and Technology, Russia}
\email{nvbogach@mail.ru}

\author{Andrei Egorov}
\address{Novosibirsk State University, Novosibirsk, Russia}
\email{a.egorov2@g.nsu.ru}

\author{Andrei Vesnin}
\address{Tomsk State University, Tomsk, Russia}
\address{Higher School of Economics, Moscow, Russia}
\email{vesnin@math.ncs.ru}

\begin{document}

\setcounter{tocdepth}{1}

\begin{abstract}
In this paper we obtain new upper bounds on volumes of right-angled polyhedra in hyperbolic space $\HH^3$ in three different cases: for ideal polyhedra with all vertices on the ideal hyperbolic boundary, for compact polytopes with only finite (or usual) vertices, and for finite volume polyhedra with vertices of both types.
\end{abstract}

\subjclass[2010]{52B10, 57M27}

\keywords{right-angled polyhedra, hyperbolic Lobachevsky space, hyperbolic knots and links}

\maketitle

\section{Introduction}

Studying volumes of hyperbolic polyhedra and hyperbolic 3-manifolds is a fundamental problem in geometry and topology. We discuss  finite volume polyhedra in hyperbolic $n$-space $\HH^n$ with all dihedral angles equal to $\frac{\pi}{2}$, and call them \emph{right-angled} polyhedra. It is known that there are no compact right-angled polytopes in $\HH^n$ for dimension $n > 4$, and there are no finite volume right-angled polyhedra in $\HH^n$ for $n > 12$~\cite{PV05, Dufour}, with examples known only for $n < 9$ \cite{PV05}. It is worth mentioning that recently studying volumes of ideal right-angled polyhedra has become more attractive and important in view of the problem of maximal~\cite{Belletti} and minimal (see, e.g.~\cite{Kol12}) volumes, and the conjecture about hyperbolic right-angled knots~\cite{CKP19}.

In the present paper, we consider right-angled hyperbolic polyhedra in $\HH^3$.  An initial list of ideal right-angled polyhedra is given in~\cite{EV20}, and of compact ones in~\cite{Inoue}. A detailed discussion of constructions of hyperbolic $3$-manifolds from right-angled polyhedra can be found in the recent survey~\cite{V17}. In the compact case, there is a number of constructions related to small covers, see e.g. ~\cite{DJ91, BP16}. Also, right-angled polytopes are useful for constructing hyperbolic 3-manifolds that bound geometrically~\cite{KMT}. Let us also mention several works on the interplay between the arithmetic hyperbolic reflection groups and arithmeticity of hyperbolic links~\cite{Kel, MMT20}. Here it turns out to be very useful that fundamental groups of some hyperbolic link complements are commensurable with hyperbolic reflection groups since Vinberg's theory of reflection groups~\cite{Vin85} can be applied.

In 1970, Andreev \cite{Andreev1, Andreev2} (see also \cite{Roeder_after_Andreev}) obtained his famous characterization of hyperbolic acute--angled $3$-polyhedra of finite volume. For right--angled polyhedra, Andreev's theorems provide simple necessary and sufficient conditions for realizing a given combinatorial type as a compact, finite--volume or ideal polyhedron in $\HH^3$. Such realizations are determined uniquely up to isometry. Thus one can expect that  geometric invariants of these polyhedra can be estimated via combinatorics. Lower and upper bounds for volumes of right-angled hyperbolic polytopes using the number of vertices were obtained by Atkinson~\cite{Atkinson}. 

In this paper we obtain new upper bound on volumes of ideal right--angled polyhedra (see Theorem~\ref{theorem:main-ideal}),  compact right--angled polytopes (see Theorem~\ref{theorem:main-compact}) and finite--volume right--angled polyhedra with both finite and ideal vertices (see Theorem~\ref{theorem:main-compact-ideal}). 

Recall \cite{Vinberg} that volumes of hyperbolic $3$-polyhedra can usually be expressed via the \emph{Loba\-chevsky function}  $$
\El (x) = - \int_{0}^{x} \log | 2 \sin t | dt. 
$$
In order to formulate the main results more conveniently, we define two constants depending on Lobachevsky function's values at certain points. The first one, $v_{8} = 8 \El (\pi/4)$, equals the volume of the regular ideal hyperbolic octahedron. Up to six decimal places $v_{8}  = 3.663862$. The second one, $v_3 = 3 \El (\pi/3)$, equals the volume of the regular ideal hyperbolic tetrahedron. Up to six decimal places $v_3  = 1.014941$.

\subsection{Ideal right-angled hyperbolic $3$-polyhedra} 

Recall that if $P \subset \mathbb H^3$ is an ideal right-angled polyhedron with $V$ vertices, then $V \geq 6$. Moreover, $V = 6$ if and only if $P$ is an octahedron, which can be described as the antiprism $A(3)$  with triangular bases. Thus, $\Vol(A(3)) = v_8$. 

The volume formula for antiprisms $A(n)$ (i.e. ideal right-angled polytopes with $V = 2n$ vertices, two $n$-gonal bases and $2n$ lateral triangles), $n \geq 3$, was obtained by Thurston~\cite[Chapter 6 \& 7]{Thurston}:
$$
\Vol (A(n)) = 2n \left[ \El \left(\frac{\pi}{4} + \frac{\pi}{2n} \right) + \El \left( \frac{\pi}{4} - \frac{\pi}{2n} \right) \right]. 
$$
For example, up to six decimal places, $\Vol(A(4)) = 6,023046$.

\medskip
In 2009, Atkinson obtained~\cite[Theorem~2.2]{Atkinson} the following upper and lower bounds for volumes via the number of vertices. Let $P$ be an ideal right-angled hyperbolic $3$-polytope with $V \geq 6$ vertices, then  
$$\frac{v_8}{4} \cdot V - \frac{v_8}{2}   \leqslant \Vol (P) \leqslant  \frac{v_{8}}{2} \cdot V -2 v_{8}.$$

Both inequalities are sharp when $P$ is a regular ideal octahedron (i.e. for $V=6$). Moreover, they are asymptotically sharp in the following sense: there exists a sequence of ideal right-angled polytopes $P_{i}$ with $V_{i}$ vertices such that $\operatorname{Vol} (P_{i}) / V_{i} \to \frac{v_{8}}{2}$ as $i \to +\infty$.  

There are no ideal right-angled polytopes with $V=7$, and $V=8$ if and only if $P$ is the antiprism $A(4)$ with quadrilateral  bases. The following upper bound was obtained in~\cite[Theorem~2.2]{EV20_1}. Let $P$ be an ideal right-angled hyperbolic $3$-polyhedron with $V \geq 9$ vertices. Then 
$\Vol (P) \leqslant  \frac{v_{8}}{2} \cdot V -  \frac{5v_{8}}{2}.$

The inequality is sharp when $P$ is the double of a regular ideal octahedron along a face (i.e. for $V=9$). The graphs of the above lower and upper bounds in comparison to the volumes of ideal right-angled polyhedra up to $21$ vertices can be found in~\cite{EV20_1}.  

\begin{theorem}\label{theorem:main-ideal} 
Let $P$ be an ideal right-angled hyperbolic $3$-polyhedron with $V$ vertices. Then the  following conditions hold.
\begin{itemize}
\item[(1)] If $V > 24$, then $\Vol(P) \leqslant \frac{v_8}{2} \cdot V - 3 v_8.$ 
\item[(2)] If $P$ has a $k$-gonal face, $k \geq 3$, then $\Vol(P) \leqslant \frac{v_8}{2} \cdot V  - \frac{k+5}{4} v_8.$
\item[(3)] If $P$ has only triangular and quadrilateral faces with $V \ge 73$, then
$$\Vol(P) \leqslant  \frac{v_8}{2} \cdot V - \left( 9 v_8 - 20 v_3 \right).$$
\end{itemize} 
\end{theorem}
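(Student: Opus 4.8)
The plan is to establish part~(2) as the workhorse estimate and then deduce parts~(1) and~(3) from it by analysing the combinatorics of the $1$-skeleton. Since $P$ is ideal and right-angled, the link of every ideal vertex is a Euclidean polygon all of whose angles equal the dihedral angles $\tfrac{\pi}{2}$, hence a rectangle; so $P$ is $4$-valent. Euler's formula then gives $E = 2V$ and $F = V+2$, and the incidence count $\sum_k (4-k) F_k = 8$ forces $F_3 = 8 + \sum_{k \ge 5} (k-4) F_k \ge 8$, so there are always at least eight triangular faces. I would keep these identities at hand throughout, since all three bounds are really statements about how the face vector $(F_3, F_4, \dots)$ constrains the volume.

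For part~(2) I would refine the decomposition argument of Atkinson~\cite{Atkinson}, which already yields the case $k=3$ (recovering $\tfrac{v_8}{2}V - 2v_8$), together with the sharpening of~\cite{EV20_1} giving the case $k=5$. The idea is to perform a surgery supported near the chosen $k$-gon --- cutting $P$ along it, or excising the ring of ideal octahedral corner pieces incident to it --- producing an ideal right-angled polyhedron with fewer vertices whose volume differs from $\Vol(P)$ by a controlled multiple of $v_8$. The quantitative heart is that each octahedral corner carries volume at most $v_8$, and a $k$-gon meets $k$ vertices; tracking how the excised volume grows with $k$ should produce the linear deficit $\tfrac{k+5}{4}v_8$, interpolating between the Atkinson and~\cite{EV20_1} endpoints.

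Given part~(2), part~(1) splits into two cases. If $P$ has a face with $k \ge 7$, then $\tfrac{k+5}{4} \ge 3$ and the claim is immediate. Otherwise every face is a triangle, quadrilateral, pentagon, or hexagon, and a single application of part~(2) only yields a deficit of $\tfrac{11}{4}v_8 < 3v_8$; here the hypothesis $V > 24$ must enter. In this regime I would argue that when $V$ is large the polyhedron has enough faces ($F = V+2$ of them) to iterate the surgery of part~(2) on two essentially disjoint faces --- equivalently, to locate a face whose removal leaves a polyhedron still covered by the~\cite{EV20_1} bound while the cut itself costs an extra $\tfrac12 v_8$ --- so that the two deficits accumulate to $3v_8$. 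This accumulation step, and pinning down the exact threshold $24$ at which it becomes available, is where I expect the main difficulty to lie, since it requires controlling the interaction of two surgeries rather than a single local move.

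Finally, for part~(3) the constant $9v_8 - 20v_3$ signals that the octahedral corner estimate above is no longer optimal once all faces are triangles and quadrilaterals: near a triangular face surrounded by quadrilaterals the extremal local configuration is governed by the regular ideal tetrahedron (angles $\tfrac{\pi}{3}$) rather than the octahedron (angles $\tfrac{\pi}{4}$), which is exactly where $v_3 = 3\El(\pi/3)$ enters. Using that now $F_3 = 8$ exactly (since $F_k = 0$ for $k \ge 5$), I would replace the eight triangular corner estimates by sharper tetrahedral ones and re-run the counting of part~(1), with the enlarged threshold $V \ge 73$ absorbing the error terms of the refined estimate. The delicate point will again be the bookkeeping: ensuring that the tetrahedral refinements at the eight triangles and the octahedral estimates at the quadrilaterals can be applied simultaneously without double-counting excised volume.
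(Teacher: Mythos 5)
Your proposal is an outline rather than a proof, and the central mechanism is missing. The paper performs no surgery or excision in the ideal case: all three parts rest on a single orthoscheme decomposition of $P$ from a well-chosen apex vertex $v$ (project $v$ to each face, then project that point to the edges), combined with concavity of the Lobachevsky function. The quantitative workhorse is the following: if the apex has $m$ \emph{quasi-adjacent} vertices (vertices sharing a face with $v$ but not an edge), then grouping the orthoschemes into cones over the remaining vertices yields $\Vol(P)\leqslant \left(V-4-\frac{m}{2}\right)\frac{v_8}{2}$, since each cone over an adjacent vertex has volume at most $\Lambda(\pi/4)$, over a quasi-adjacent vertex at most $2\Lambda(\pi/4)$, and over any other vertex at most $4\Lambda(\pi/4)$. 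Part (2) is then immediate because a vertex of a $k$-gon has at least $k-3$ quasi-adjacent vertices, and part (1) follows from an averaging computation showing the mean number of quasi-adjacent vertices equals $4-\frac{24}{V}+\frac{1}{V}\sum_{k\geqslant 5}(k^2-7k+12)p_k>3$ exactly when $V>24$ --- this is where the threshold $24$ comes from, a point your plan leaves entirely open (you explicitly defer the ``accumulation of two surgeries'' that your route would require). Your primary mechanism for part (2), ``excising the ring of ideal octahedral corner pieces,'' does not produce a hyperbolic right-angled polyhedron, so the claimed controlled volume difference has no meaning. Your alternative of cutting along the $k$-gon can be salvaged: doubling $P$ across that face gives an ideal right-angled polyhedron with $2V-k$ vertices, and applying the bound $\Vol\leqslant\frac{v_8}{2}V'-\frac{5v_8}{2}$ of \cite{EV20_1} to the double does yield exactly $\frac{v_8}{2}V-\frac{k+5}{4}v_8$; but you neither carry out this computation nor verify that the double is a legitimate ideal right-angled polyhedron with at least $9$ vertices, so even this part is not established as written.

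For part (3) your intuition that $v_3$ should replace $v_8$ near triangular faces is correct in spirit, but again the mechanism is absent. The threshold enters through a counting fact you do not identify: each of the (exactly eight) triangular faces is incident or quasi-incident to at most $9$ vertices, so if $V>72$ there exists a vertex $v$ with no incident or quasi-incident triangle. Taking that $v$ as apex and summing cone volumes over \emph{faces} rather than vertices, each triangle contributes at most $3\Lambda(\pi/3)=v_3$, each of the eight quadrilaterals quasi-incident to $v$ contributes at most $3\Lambda(\pi/6)$, and every remaining face at most $4\Lambda(\pi/4)$; assembling these with $F=V+2$ gives the constant $9v_8-20v_3$. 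Without the existence of such an apex and the face-by-face cone estimate, your plan to ``replace the eight triangular corner estimates by sharper tetrahedral ones'' cannot be executed and the bookkeeping you worry about cannot even be started.
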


The proof of Theorem~\ref{theorem:main-ideal} is given in Section~\ref{sec3}.

\subsection{Compact right-angled hyperbolic $3$-polytopes}

It is well-known that for a compact right-angled polytope in $\HH^3$ with $V$ vertices we have either $V = 20$ or $V \ge 24$. Moreover, $V = 20$ if and only if $P$ is a regular right-angled dodecahedron. 

The volume formula is known for an infinite series of compact right-angled \emph{Loebell polytopes} $L(n)$, $n \geq 5$, with $V = 4n$, two $n$-gonal bases and $2n$ lateral pentagonal faces. In particular, $L(5)$ is a regular dodecahedron. 
By~\cite{V98}, for $n \geq 5$, the volume of $L(n)$ is
$$
\Vol(L(n)) = \frac{n}{2} \left[ 2 \El (\theta) + \El \left( \theta + \frac{\pi}{n} \right) + \El \left( \theta - \frac{\pi}{n} \right) - \El \left( 2 \theta - \frac{\pi}{2} \right) \right], 
$$
where $\theta = \frac{\pi}{2} - \arccos \left(\frac{1}{2 \cos (\pi/n)}\right)$. 

\medskip 
Two-sided bounds for volumes of compact right-angled hyperbolic $3$-polytopes were obtained by Atkinson \cite[Theorem~2.3]{Atkinson}.  Namely, if $P$ is a compact right-angled hyperbolic $3$-polytope with $V$ vertices, then  
$$
\frac{v_8}{32} \cdot V - \frac{v_8}{4} \leq \Vol(P) <  \frac{5v_3}{8} \cdot V  - \frac{25}{4} v_3. 
$$
There exists a sequence of compact right-angled polytopes $P_i$ with $V_i$ vertices such that 
$\Vol(P_i)/V_i \to \frac{5v_3}{8}$ with $i \to +\infty$. 

The upper bound can be improved if we exclude the case of the dodecahedron. Indeed, by~\cite[Theorem~2.4]{EV20_1}, if $P$ is a compact right-angled hyperbolic $3$-polytope with $V \geq 24$ vertices, then 
$\Vol(P) \leq  \frac{5v_3}{8} \cdot V - \frac{35}{4} v_3.$

\begin{theorem} \label{theorem:main-compact} 
Let $P$ be a compact right-angled hyperbolic polytope with $V$ vertices. Then the following inequalities hold.
\begin{itemize}
\item[(1)] If $V > 80$, then  
$\Vol(P) \leq  \frac{5v_3}{8} \cdot V -  10 v_3.$
\item[(2)] If $P$ has a $k$-gonal face, $k \geq 5$, then  
$$
\Vol(P) \leq    \frac{5v_3}{8} \cdot V  - \frac{5k + 35}{8} v_3.
$$
\end{itemize} 
\end{theorem}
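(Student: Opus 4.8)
The plan is to base everything on one geometric operation, reflection of $P$ in the plane of a face, and to feed the reflected polytope into the already recalled bound $\Vol(P)\le \frac{5v_3}{8}V-\frac{35}{4}v_3$ (valid for $V\ge 24$, see~\cite{EV20_1}). First I would record the combinatorics forced by Andreev's theorem: a compact right-angled polytope is simple, so $E=\frac32 V$ and $F=\frac V2+2$, every face has at least five sides, and if $p_k$ denotes the number of $k$-gonal faces then Euler's formula gives $\sum_k(6-k)p_k=12$. Hence $p_5=12+p_7+2p_8+\cdots\ge 12$ and $V=20+\sum_{k\ge 6}2(k-5)p_k$; in particular, if all faces are pentagons then $V=20$ and $P$ is the dodecahedron.

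The central construction is the double $DP$ of $P$ across the plane of a chosen $k$-gonal face $F$. Each face meeting $F$ does so orthogonally, hence joins smoothly to its mirror image; therefore $DP$ is again a compact right-angled polytope, all of its faces still have at least five sides, and $\Vol(DP)=2\Vol(P)$. A direct vertex count shows that the $k$ vertices on $F$ cease to be vertices (the edge transverse to $F$ at each of them is orthogonal to the mirror, so it straightens out), while every other vertex is doubled; thus $DP$ has $2V-2k$ vertices. Since $DP$ is not a dodecahedron and $2(V-k)\ge 24$ (easily checked), the recalled bound applies to $DP$ and gives
$$2\Vol(P)=\Vol(DP)\le \frac{5v_3}{8}\,(2V-2k)-\frac{35}{4}v_3 .$$
Dividing by $2$ yields precisely part (2): $\Vol(P)\le \frac{5v_3}{8}V-\frac{5k+35}{8}v_3$.

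For part (1) I would argue by cases. If $P$ has a face with $k\ge 9$ sides, then $\frac{5k+35}{8}\ge 10$, so part (2) already gives the claim. If every face is a pentagon, then $V=20<80$ and there is nothing to prove. This leaves the genuinely delicate case in which $V>80$ and all faces have between $6$ and $8$ sides. Here a single reflection is insufficient: doubling across a face of size $k$ replaces the constant $\frac{35}{4}$ by $\frac{5k+35}{8}$, which for $k\le 8$ never reaches $10$, and because doubling doubles both the volume and (up to a bounded correction) the number of vertices, iterating reflections only improves the constant if the reflected faces are \emph{large} on average --- whereas enlarging faces to size $\ge 9$ first requires reflecting across \emph{small} faces. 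Concretely, unwinding a sequence of reflections across faces of sizes $k_1,k_2,\dots$ produces a constant governed by the $2$-adically weighted average $\sum_j 2^{-j}k_j$, which must be at least $8$ to reach $10v_3$.

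This tension is exactly where I expect the main obstacle to lie: polytopes whose faces are all small (for instance fullerene-type polytopes with precisely $12$ pentagons and many hexagons, which are genuinely realizable as compact right-angled polytopes) cannot be handled by reflections alone, so this case must be treated by a dedicated refinement --- a direct decomposition of $P$ into right-angled pieces with a volume estimate sharp enough that the excess of large-face vertices, quantified by $\sum_{k\ge 6}(k-5)p_k=\frac V2-10>30$, contributes the extra $\frac54 v_3$ beyond the bound of~\cite{EV20_1}. I would expect the threshold $V>80$ to be calibrated precisely so that this final estimate closes, and this refined estimate, rather than the (routine) reflection argument, to be the crux of the proof.
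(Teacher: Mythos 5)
Your part (2) is correct and in fact takes a slightly cleaner route than the paper: the paper also doubles $P$ across a face, but then invokes the ``three consecutive faces'' bound of Lemma~\ref{lemma:compact:faces-size-vol-estimation} (from \cite{EV20-2}) applied to the doubled faces around an edge, whereas you only need the already-recalled bound $\Vol \leq \frac{5v_3}{8}V - \frac{35}{4}v_3$ of \cite{EV20_1} applied to the double; the arithmetic $\frac{1}{2}\bigl(\frac{5v_3}{8}\cdot 2k + \frac{35}{4}v_3\bigr) = \frac{5k+35}{8}v_3$ checks out, and your ``easily checked'' claim $2(V-k)\geq 24$ does hold (from $V\geq 2k$, obtained by counting the off-face vertices of the $k$ faces ringing $F$, together with $V\geq 24$ whenever $P$ is not the dodecahedron), though it deserves the one-line justification.

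For part (1), however, you have correctly diagnosed that your method fails exactly in the hard case (all faces of size $5$--$8$, e.g.\ fullerene-type polytopes) and you have not supplied the missing argument, so the proof is incomplete. The idea you are missing is that the relevant quantity is not the size of a single face but the \emph{total size of the four faces around an edge}. The paper proves (Lemma~\ref{lemma2.7}, an averaging argument over edges entirely parallel to your face-size averaging) that when $V>80$ the average number of vertices quasi-incident to an edge exceeds $13$, hence some edge $e$ has surrounding faces $f_1,\dots,f_4$ with $\sum_{i=1}^{4}k_i\geq 24$ (Corollary~\ref{corollary:compact:fat-faces}) --- note $\sum k_i \geq 24$ is achievable with all $k_i\in\{5,6\}$, so no large face is needed. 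A \emph{single} reflection across $f_1$ then doubles $f_2,f_3,f_4$ simultaneously into faces of sizes $2k_i-4$ forming a path in $P'$, and Lemma~\ref{lemma:compact:faces-size-vol-estimation} extracts all three at full weight: $2\Vol(P)\leq \bigl(2V-2k_1-\sum_{i=2}^{4}(2k_i-4)+4\bigr)\frac{5v_3}{8}$, i.e.\ $\Vol(P)\leq (V-\sum k_i+8)\frac{5v_3}{8}\leq (V-16)\frac{5v_3}{8}$. This is why the $2$-adic damping you worried about never occurs: only one doubling is performed, and the multi-face lemma (not iterated reflection) does the rest. Without Lemma~\ref{lemma:compact:faces-size-vol-estimation} or an equivalent substitute, your outline of part (1) cannot be closed.
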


The proof of Theorem~\ref{theorem:main-compact} is given in Section~\ref{sec4}.

\subsection{Right-angled hyperbolic $3$-polyhedra with finite and ideal vertices}
As well as in the ideal and compact cases, Atkinson obtained volume bounds for right-angled hyperbolic polyhedra having both finite and ideal vertices \cite[Theorem~2.4]{Atkinson}. If $P$ is a finite-volume right-angled hyperbolic polyhedron with $V_{\infty}$ ideal vertices and $V_F$ finite vertices, then   
\begin{equation}
\frac{v_8}{8} \cdot V_{\infty} + \frac{v_8}{32} \cdot V_F -  \frac{v_8}{4} \leq \Vol(P)< 
\frac{v_8}{2} \cdot V_{\infty} + \frac{5 v_3}{8} \cdot V_F -  \frac{v_8}{2}.   \label{eqn1}
\end{equation}
	
\medskip 	

Provided more combinatorial information about $P$, we are able to improve the latter bound as follows. 

\begin{theorem} \label{theorem:main-compact-ideal} 
	Let $P$ be a finite-volume right-angled hyperbolic $3$-polyhedron with $V_\infty$ ideal vertices and $V_F$ finite vertices. If $V_\infty + V_F > 17$, then 
$$ 
\Vol(P)<  \frac{v_8}{2} \cdot V_{\infty} + \frac{5 v_3}{8} \cdot V_F - \left( v_8 + \frac{5v_3}{2} \right).  
$$
\end{theorem}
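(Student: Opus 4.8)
The plan is to reprove the estimate by the same geometric decomposition that drives the purely ideal and purely compact cases, but carrying the two vertex types through the bookkeeping simultaneously. First I would record the local structure forced by Andreev's theorem: in a right-angled polyhedron each finite vertex is $3$-valent and each ideal vertex is $4$-valent, so that $2E=3V_F+4V_\infty$ and, by Euler's formula, $F=2+\tfrac12 V_F+V_\infty$. The arithmetic that guides everything is the splitting $v_8+\tfrac{5v_3}{2}=\tfrac{v_8}{2}+\tfrac{v_8}{2}+\tfrac{5v_3}{2}$: the first summand is exactly Atkinson's constant in \eqref{eqn1}, the second is the gain realized in the ideal improvement (the constant there sharpens from $2v_8$ to $\tfrac{5v_8}{2}$), and the third is the gain realized in the compact improvement (from $\tfrac{25v_3}{4}$ to $\tfrac{35v_3}{4}$). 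This decoupling is the heart of the matter: it says the extra savings attached to ideal and to finite vertices are independent, so the target is to extract both at once.

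The engine is slicing $P$ along a prismatic circuit. By Andreev's theorem the plane spanned by a prismatic $k$-circuit meets $P$ orthogonally, and cutting along it yields two right-angled polyhedra $P_1,P_2$ with $\Vol(P)=\Vol(P_1)+\Vol(P_2)$; each piece gains a single new $k$-gonal face, the original vertices are partitioned between the pieces, and the $k$ crossing points become new (generically finite) vertices of each piece, so $V(P_1)+V(P_2)=V+2k$. Writing $C=v_8+\tfrac{5v_3}{2}$ and assuming the bound inductively for $P_1$ and $P_2$, summing the two estimates and comparing with the target for $P$ reduces the inductive step to
$$\frac{v_8}{2}\,a+\frac{5v_3}{8}\,b\le C,$$
where $a$ and $b$ count the newly created ideal and finite vertices and $a+b=2k$. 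For short circuits this holds comfortably, and whenever a cut detaches a piece that is purely ideal or purely compact, or that carries many vertices, I can feed it into Theorem~\ref{theorem:main-ideal} or Theorem~\ref{theorem:main-compact} (including their per-face parts), whose constants are strictly larger than $C$ and close the step with room to spare.

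The hard part will be the combinatorial guarantee that the induction can be started, together with the control of the borderline numerics. Andreev's conditions forbid the shortest prismatic circuits, so a polyhedron may admit only cuts of length $k\ge 5$; a $5$-circuit creating five new finite vertices in each piece gives $b=10$ and $\tfrac{5v_3}{8}\cdot 10=\tfrac{25v_3}{4}$, which slightly exceeds $C$, so the crude inductive step just fails and must instead be closed by applying the sharper pure-case constants to the (then necessarily large) pieces. I would accordingly separate the proof into a case where $P$ carries a face or circuit large enough to invoke the per-face parts of Theorems~\ref{theorem:main-ideal} and~\ref{theorem:main-compact}, and a complementary case where all faces and circuits are small; in the latter the identity $F=2+\tfrac12V_F+V_\infty$ should force enough $4$-valent ideal structure to yield the extra $\tfrac{v_8}{2}$ and enough $3$-valent finite structure to yield the extra $\tfrac{5v_3}{2}$. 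The residual configurations with $V_\infty+V_F$ just above $15$ that admit no useful cut would be verified directly, and organizing this case analysis so that every such polyhedron is caught is where I expect the bulk of the effort to lie.
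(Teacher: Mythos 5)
Your plan hinges on cutting $P$ along a prismatic $k$-circuit into two right-angled pieces $P_1,P_2$ with $\Vol(P)=\Vol(P_1)+\Vol(P_2)$, and this is the step that fails. For the volume to be additive you would need an actual totally geodesic plane in $\HH^3$ that spans the circuit and meets every face it crosses orthogonally; Andreev's theorem gives no such plane, and for a generic prismatic circuit none exists. What the literature does provide is the purely combinatorial decomposition of Inoue: the two pieces are separately realizable as right-angled polyhedra, but the comparison of volumes goes the wrong way for you --- decomposition does not increase total volume, i.e.\ $\Vol(P_1)+\Vol(P_2)\le\Vol(P)$ (this is exactly how one shows the smallest polyhedra are indecomposable). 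So an inductive upper bound on the pieces gives no upper bound on $P$, and the induction cannot even start in the direction you need. On top of this, you yourself note that even granting additivity the inductive step fails numerically in the most common case ($k=5$ with all ten new vertices finite, $\tfrac{25v_3}{4}>v_8+\tfrac{5v_3}{2}$), and the promised repair via the per-face parts of Theorems~\ref{theorem:main-ideal} and~\ref{theorem:main-compact}, as well as the identification and verification of the base cases, is not carried out. The heuristic splitting of the constant $v_8+\tfrac{5v_3}{2}$ into ``independent gains'' from the ideal and compact improvements is suggestive but does not substitute for these missing steps.

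For contrast, the paper goes in the opposite direction: instead of cutting $P$ it repeatedly \emph{doubles} it. One chooses a face $f^1$ that is not an ideal triangle and has at least $6$ neighbours (guaranteed by Lemma~\ref{lemma:ideal-finite} and Lemmas~\ref{lemma:6}--\ref{lemma5.2}), reflects $P$ through the plane of $f^1$ --- here the orthogonality needed for volume additivity is automatic, since every face meets $f^1$ at a right angle --- and applies Atkinson's bound \eqref{eqn1} to the double, which has strictly fewer vertices per unit volume than two disjoint copies of $P$. Iterating and letting the number of doublings tend to infinity, the corrections $c_i$ form a geometric series summing to exactly $v_8+\tfrac{5v_3}{2}$. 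If you want to salvage a cutting argument, you would first have to replace volume additivity by a usable inequality in the correct direction, which is precisely what the doubling trick achieves for free.
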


The proof of Theorem~\ref{theorem:main-compact-ideal} is given in Section~\ref{sec5}.

\subsection{Acknowledgments} 
This research project was started during the Summer 2021 Research Program for Undergraduates organized by the Laboratory of Combinatorial and Geometric Structures at MIPT. 
The authors were supported by the Theoretical Physics and Mathematics Advancement Foundation ``BASIS''. A.V. was also supported in part by the RSF grant no. 20-61-46005. 

\section{Combinatorics of hyperbolic right-angled polyhedra}

\subsection{Hyperbolic polyhedra}

Let $\R^{d,1}$ be the vector space $\R^{d+1}$ equipped with a scalar  product $\langle \cdot, \cdot \rangle$ of signature $(d,1)$, and let $f_d$ be the associated quadratic form. The coordinate representation of $f_d$ with respect to an appropriate basis of $\R^{d,1}$ is
$$ f_d(x) =  -x_0^2 + x_1^2 + \dotsm + x_d^2.$$
The \emph{hyperbolic $d$-space} $\HH^{d}$ is the upper half-sheet (a connected component) of the hyperboloid $f_d(x) = -1$: 
$$
\HH^{d} = 
\{ x \in  \R^{d,1} \mid f_d (x) = -1\ \textrm{ and }\, x_{0} > 0 \}.
$$
In this model, points from the ideal hyperbolic boundary correspond to isotropic vectors:
$$
\partial \HH^d = \{x \in  \R^{d,1} \mid f_d(x)=0 \textrm{ and }\, x_{0} > 0\}/\R^*.
$$

A \emph{convex hyperbolic $d$-polyhedron} is the intersection, with non-empty interior, of a finite family of closed half-spaces in hyperbolic $d$-space $\HH^d$ . A \emph{hyperbolic Coxeter $d$-polyhedron} is a convex hyperbolic $d$-polyhedron $P$ all of whose dihedral angles are integer sub-multiples of $\pi$, i.e. of the form $\frac{\pi}{m}$ for some integer $m \geqslant 2$. A hyperbolic Coxeter polyhedron is called \emph{right-angled} if all its dihedral angles are $\frac{\pi}{2}$. A generalized\footnote{A \textit{generalized convex polyhedron} $P$ is the intersection, with non-empty interior, of possibly infinitely many closed half-spaces in hyperbolic $d$-space such that every closed ball intersects only finitely many bounding hyperplanes of $P$} convex polyhedron is said to be \textit{acute-angled} if all its dihedral angles do not exceed $\frac{\pi}{2}$. 

It is known that generalized Coxeter polyhedra are the natural fundamental domains of discrete reflection groups in spaces of constant curvature, see~\cite{Vin85}.

A convex $d$-polyhedron has \emph{finite volume} if and only if it is the convex hull of finitely many points of the closure $\overline{\HH^d} = \HH^d \cup \partial \HH^d$. If a convex polyhedron is \emph{compact}, then it is called a \emph{polytope}, and is a convex hull of finitely many proper points of $\HH^d$, which are its \emph{proper} (or \emph{finite}) vertices. And, finally, a convex polyhedron is called \emph{ideal}, if all its vertices are situated on the ideal hyperbolic boundary $\partial \HH^d$ (such vertices are also called \emph{ideal}). It is known that compact acute-angled polytopes (in particular, compact Coxeter polytopes) in $\HH^d$ are \emph{simple}, i.e. every vertex belongs to exactly $d$ facets (and $d$ edges).

Two compact polytopes $P$ and $P'$ in Euclidean space $\EE^d$ are \emph{combinatorially equivalent} if there is a bijection between their faces that preserves the inclusion relation. A combinatorial equivalence class is called a \emph{combinatorial polytope}. Note that if a hyperbolic polyhedron $P \subset \HH^d$ is of finite volume, then the closure $\overline{P}$ of $P$ in $\overline{\HH^d}$ is combinatorially equivalent to a compact polytope of $\EE^d$. 

\medskip

The following theorem is a special case of Andreev's theorem (see \cite{Andreev1, Andreev2}).

\begin{theorem}\label{theorem:Andreev} 
Let $\mathcal{P}$ be a combinatorial $3$-polytope. There exists a finite--volume  right-angled hyperbolic $3$-polyhedron $P \subset \overline{\HH^3}$ that realizes $\mathcal{P}$ if and only if:
\begin{enumerate}
    \item $\mathcal{P}$ is neither a tetrahedron, nor a triangular prism;
    \item every vertex of $P$ belongs to at most four faces;
    \item if $f$, $f'$, and $f''$ are faces of $\mathcal{P}$, and $e' = f \cap f'$, $e'' = f \cap f''$ are non-intersecting edges, then $f'$ and $f''$ do not intersect each other;
    \item there are no faces $f_1$, $f_2$, $f_3$, $f_4$ such that $e_i := f_i \cap f_{i + 1}$ (indices $\mathrm{mod}\, 4$) are pairwise non-intersecting edges of $\mathcal{P}$.
\end{enumerate}
\end{theorem}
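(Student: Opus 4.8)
The plan is to deduce this statement directly from Andreev's general theorem on finite-volume acute-angled hyperbolic $3$-polyhedra, specialized to the case where every prescribed dihedral angle equals $\frac{\pi}{2}$. Recall that Andreev's theorem (in the Roeder--Hubbard--Dunbar formulation, see~\cite{Roeder_after_Andreev}) asserts that a combinatorial $3$-polytope $\mathcal{P}$, together with an assignment of angles $\alpha_i \in (0, \frac{\pi}{2}]$ to its edges, is realized by a finite-volume acute-angled hyperbolic polyhedron, uniquely up to isometry, if and only if a finite list of linear inequalities on the $\alpha_i$ is satisfied: one inequality at each vertex where three faces meet, together with strict inequalities along each prismatic $3$-circuit and each prismatic $4$-circuit of faces. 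The strategy is to substitute $\alpha_i = \frac{\pi}{2}$ everywhere and to verify that each of Andreev's inequalities collapses to exactly one of the purely combinatorial conditions (1)--(4).

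First I would analyze the vertex condition. At a vertex where $k$ faces meet, the vertex link is a convex spherical $k$-gon whose interior angles are the incident dihedral angles, and the vertex is a proper point of $\HH^3$ precisely when this polygon has positive area, i.e. when $\sum \alpha_i > (k-2)\pi$. Setting all angles equal to $\frac{\pi}{2}$ turns this into $k \cdot \frac{\pi}{2} > (k-2)\pi$, which holds exactly for $k \le 3$; the borderline case $k = 4$ gives angle sum $2\pi$, hence a Euclidean rectangular link, hence an ideal vertex; and $k \ge 5$ is impossible. This reproduces condition (2) and simultaneously identifies the $4$-valent vertices as precisely the ideal ones.

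Next I would handle the two prismatic conditions. Andreev's inequality $\alpha_i + \alpha_j + \alpha_k < \pi$ for a prismatic $3$-circuit becomes $\frac{3\pi}{2} < \pi$, which is false, so no prismatic $3$-circuit can occur; unwinding the definition of such a circuit (three faces pairwise sharing edges but with no common vertex) yields exactly the contrapositive stated in condition (3). Similarly, Andreev's inequality $\alpha_i + \alpha_j + \alpha_k + \alpha_l < 2\pi$ for a prismatic $4$-circuit becomes $2\pi < 2\pi$, again false, giving condition (4). Condition (1) then absorbs the standing hypotheses of the general theorem: the tetrahedron has too few faces to lie in its scope, and the triangular prism is the classical exceptional combinatorial type that must be excluded separately.

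The main obstacle I anticipate is not the elementary algebra of specializing the inequalities, but the bookkeeping needed to make the translation faithful. In particular, one must invoke the finite-volume rather than the strictly compact form of Andreev's theorem so that ideal ($4$-valent) vertices are admitted, and one must carefully separate a genuine $4$-valent vertex (four faces through a common vertex, allowed) from a prismatic $4$-circuit (four faces whose consecutive intersection edges are mutually disjoint, forbidden); this is exactly the role of the ``non-intersecting edges'' clause in conditions (3) and (4). Finally, one should confirm that the exclusions in condition (1) are both necessary, by exhibiting the obstruction to a right-angled realization of the tetrahedron and of the triangular prism, and sufficient to rule out the remaining degenerate configurations.
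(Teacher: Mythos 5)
The paper offers no proof of this statement at all---it is quoted verbatim as ``a special case of Andreev's theorem'' with a citation to \cite{Andreev1, Andreev2}---and your derivation, substituting $\alpha_i = \tfrac{\pi}{2}$ into the general vertex and prismatic-circuit inequalities of the finite-volume Andreev theorem, is precisely the standard specialization implicit in that citation. Your sketch is correct (including the key points that $4$-valent vertices become ideal and that a prismatic $4$-circuit must be distinguished from four faces meeting at an ideal vertex), so it simply supplies the routine details the paper omits.
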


In a right-angled polyhedron $P \subset \overline{\HH^3}$, a vertex $v$ lies in $\HH^3$ (i.e. is finite) if and only if it belongs to exactly three faces of $P$. If a vertex $v$ is contained in four faces of $P$, then $v \in \partial\HH^d$.

\subsection{Combinatorics of ideal right-angled hyperbolic $3$-polyhedra}

Let $P$ be an ideal hyperbolic right-angled $3$-polyhedron. Let $V$ be the number of vertices, $E$ the number of edges, and $F$ the number of faces of $P$. The Euler characteristic of $P$ equals $V - E + F = 2$. Every ideal vertex of a finite volume right-angled hyperbolic $3$-polyhedron is contained in exactly four edges which implies $4V = 2E$, and hence $F = V + 2$. Let $p_k$ denote the number of $k$-gonal faces of $P$. Then the previous equalities provide
$\sum_{k \geqslant 3} p_k = F$, $\sum_{k \geqslant 3} k p_k = 4V$, and $p_3 = 8 + \sum_{k \geqslant 5} (k - 4) p_k.$

\medskip

We say that two vertices of $P$ are \emph{adjacent} if they are connected by an edge. Two vertices are \emph{quasi-adjacent} if they belong to the same face but are not adjacent.

\begin{example}
    \begin{figure}[ht]
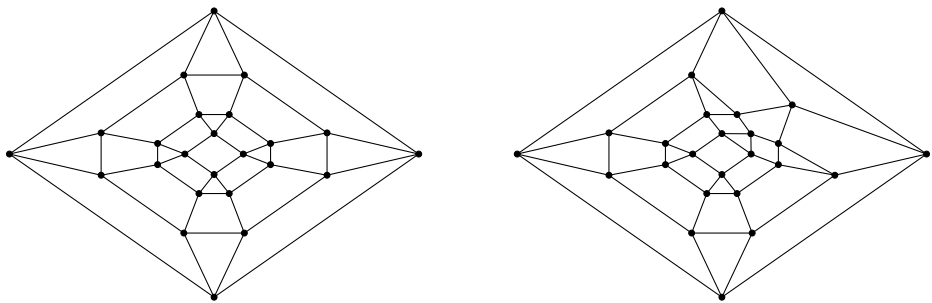

        \centering
        
        \includegraphics[width=\textwidth/3]{ideal_with_24_vert_1.tikz}
        \hspace{1cm}
        \includegraphics[width=\textwidth/3]{ideal_with_24_vert_2.tikz}

        \caption{Ideal right-angled polyhedra with $N = 24$ vertices.}
        \label{figure:ideal-24}
    \end{figure}
    
    Two ideal right-angled polyhedra with $24$ vertices from~\cite{EV20} are shown in Figure~\ref{figure:ideal-24}. Each vertex of those polyhedra belongs to exactly one triangular face and three quadrilateral faces. Thus, each vertex of those polyhedra has exactly three quasi-adjacent vertices. 
\end{example}

\begin{proposition} \label{proposition:ideal-vertex-with-4-qn}
    Let $P$ be an ideal right-angled hyperbolic $3$-polyhedron with $V > 24$ vertices. Then there is a vertex that has at least $4$ quasi-adjacent ones. 
\end{proposition}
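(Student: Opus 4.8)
The plan is to run a double-counting argument on quasi-adjacent pairs and then apply pigeonhole. Write $p_k$ for the number of $k$-gonal faces of $P$, and for a vertex $v$ let $q(v)$ denote the number of vertices quasi-adjacent to $v$. I would argue by contradiction, assuming that $q(v) \le 3$ for every vertex $v$, and derive $V \le 24$.

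The first and key step is to compute $q(v)$ exactly in terms of the faces containing $v$. Fix a face $f$ that is a $k$-gon and contains $v$; among the other $k-1$ vertices of $f$, exactly two are joined to $v$ by an edge of $f$, so $f$ contributes exactly $k-3$ vertices that share the face $f$ with $v$ without being adjacent to it along $f$. I claim that these contributions, taken over the four faces at $v$, involve pairwise distinct vertices and are genuinely non-adjacent to $v$. Both assertions rest on the same elementary fact about convex polytopes: two distinct faces of $P$ meet in a common face of $P$, hence in at most a single edge. Consequently two non-adjacent vertices lie on at most one common face (if they lay on two, that intersection would contain two non-adjacent vertices, impossible for a vertex or an edge), and a pair of vertices that is non-adjacent inside some face cannot be joined by any edge (such an edge would belong to two faces, forcing a face intersection containing both of its endpoints in a non-adjacent position). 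Therefore $q(v) = \sum_{f \ni v}(k_f - 3)$ exactly, and summing over all vertices,
$$\sum_{v} q(v) = \sum_{f}k_f(k_f-3) = \sum_{k \ge 4}k(k-3)\,p_k.$$

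The second step is purely combinatorial and uses only the identities recorded before the statement. Since each ideal vertex lies on exactly four edges and four faces, we have $E = 2V$, $F = V+2$ and $\sum_k k\,p_k = 4V$, whence
$$\sum_{k \ge 4}(k-3)\,p_k = \sum_k k\,p_k - 3\sum_k p_k = 4V - 3(V+2) = V - 6.$$
Because $k(k-3) \ge 4(k-3) \ge 0$ for every $k \ge 4$, the sum from the first step is bounded below by $4\sum_{k\ge4}(k-3)\,p_k = 4(V-6)$, so $\sum_v q(v) \ge 4V - 24$.

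Finally, combining the two steps: under the assumption $q(v)\le 3$ for all $v$ we would have $\sum_v q(v) \le 3V$, and together with $\sum_v q(v)\ge 4V-24$ this forces $4V - 24 \le 3V$, i.e. $V \le 24$, contradicting $V > 24$. Hence some vertex has at least four quasi-adjacent vertices. I expect the only delicate point to be the first step --- the claim that the quasi-adjacency counts coming from different faces at $v$ never coincide and never secretly record an edge; once the principle ``two faces meet in at most an edge'' is invoked this becomes routine, and the remainder is arithmetic. It is worth noting in advance that the bound is sharp: the $24$-vertex examples in Figure~\ref{figure:ideal-24}, whose faces are triangles and quadrilaterals and in which every vertex has exactly three quasi-adjacent vertices, realize the equality $\sum_v q(v) = 4V - 24 = 3V$.
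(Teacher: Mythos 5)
Your proof is correct and follows essentially the same route as the paper: both count ordered quasi-adjacent pairs face by face to get $\sum_v q(v)=\sum_{k}k(k-3)p_k$, and both reduce this via $F=V+2$ and $\sum_k k\,p_k=4V$ to $\sum_v q(v)\geq 4V-24>3V$ for $V>24$, whence some vertex has $q(v)\geq 4$. The only differences are cosmetic: you bound $k(k-3)$ below by $4(k-3)$ where the paper carries the nonnegative residual $\sum_{k\geq 5}(k-3)(k-4)p_k$ explicitly, and you spell out (via ``two faces meet in at most an edge'') the identity $q(v)=\sum_{f\ni v}(k_f-3)$ that the paper takes for granted.
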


\begin{proof}
Let $q(v)$ be the number of vertices that are quasi-adjacent to $v$. Then the average number of quasi-adjacent vertices in $P$ equals
    \begin{multline*}
        \frac{\sum_v q(v)}{V} =
        \frac{1}{V} \sum_{k \geqslant 3} k (k - 3) p_k = \frac{1}{V} \sum_{k \geqslant 3} k^2 p_k - \frac{3}{V} \sum_{k \geqslant 3} k p_k = \\
         = \frac{1}{V} \sum_{k \geqslant 3} k^2 p_k - 12 = \frac{1}{V} \Big[3^2\,p_3 + 4^2\,\Big(F - p_3 - \sum_{k \geqslant 5} p_k\Big) + \sum_{k \geqslant 5} k^2 p_k\Big] - 12,
         \end{multline*}
        which is equal to 
         \begin{multline*}
        \frac{1}{V} \Bigg[3^2\,\Big(8 + \sum_{k \geqslant 5} (k - 4)p_k\Big) + 4^2\,\Big(F - p_3 - \sum_{k \geqslant 5} p_k\Big) + \sum_{k \geqslant 5} k^2 p_k\Bigg] - 12 = \\
        = \frac{1}{V} \Bigg[3^2\,\Big(8 + \sum_{k \geqslant 5} (k - 4)p_k\Big) + 4^2\,\Big(V - 6 - \sum_{k \geqslant 5} (k - 3) p_k\Big) + \sum_{k \geqslant 5} k^2 p_k\Bigg] - 12 = \\
        = 4 - \frac{24}{V} + \frac{1}{V} \sum_{k \geqslant 5} (k^2 - 7k + 12) p_k \geqslant 4 - \frac{24}{V} > 3.
    \end{multline*}
Therefore, there is a vertex in $P$ that has at least $4$ quasi-adjacent vertices.
\end{proof}

\begin{remark} \label{remark:ideal-k-face-qn}
    Each vertex of a $k$-gonal face has at least $k - 3$ quasi-adjacent ones. So if a polytope has a $k$-gonal face for $k \ge 7$ then it has a vertex with at least $4$ quasi-adjacent ones. 
\end{remark}

\medskip

We say that a face $f$ and a vertex $v$ are \emph{incident} if $v$ belongs to $f$. A face $f$ and a vertex $v$ are \emph{quasi-incident} if they are not incident, but $v$ has an incident face $f'$ such that $f'$ shares an edge with $f$.  

\begin{proposition} \label{proposition:ideal-isolated-vertex}
Let $P$ be an ideal right-angled hyperbolic polyhedron with $V > 72$ vertices and without $k$-gonal faces for all $k \geqslant 5$. Then there is a vertex without  incident and quasi-incident triangular faces.
\end{proposition}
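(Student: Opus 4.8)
The plan is to recast the statement as a counting problem. Call a vertex \emph{affected} by a given triangular face $f$ if $f$ is incident or quasi-incident to it; a vertex is \emph{good} (as the proposition requires) precisely when no triangular face affects it. Unwinding the definitions, the vertices affected by $f$ are exactly those lying on $f$ together with those lying on a face that shares an edge with $f$. Thus it suffices to bound the total number of affected vertices strictly below $V$.

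First I would exploit the hypothesis that $P$ has no $k$-gonal faces with $k\geq 5$. Substituting this into the identity $p_3 = 8 + \sum_{k\geq 5}(k-4)p_k$ from the combinatorics of ideal right-angled polyhedra collapses the sum to zero, so $P$ has exactly $p_3 = 8$ triangular faces and every face is a triangle or a quadrilateral. This is the decisive structural simplification: the number of triangular faces is an absolute constant, independent of $V$.

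Next I would bound, for a single triangular face $f$, the number of vertices it affects. A triangle has three edges, and in a convex polytope two faces meet in at most one edge, so the three faces glued to $f$ along its edges are distinct; each is a triangle or a quadrilateral, hence shares the two endpoints of the common edge with $f$ and contributes at most two further vertices. Counting the three vertices of $f$ together with at most two new vertices from each of its three neighbours yields at most $3 + 3\cdot 2 = 9$ affected vertices per triangular face.

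Summing over the eight triangular faces, and using that the set of all affected vertices is the union of these per-face sets, I obtain at most $8\cdot 9 = 72$ affected vertices. Since $V > 72$, at least one vertex escapes, i.e.\ has neither incident nor quasi-incident triangular faces, which proves the proposition. The only delicate point is the per-face bound of $9$: one must verify that the three edge-neighbours of $f$ are genuinely distinct faces and that a neighbouring quadrilateral cannot secretly contain the third vertex of $f$ (otherwise two faces would meet in three vertices, contradicting convexity). Once this local bookkeeping is settled the global count is immediate, so I expect the verification of the $9$-bound to be the main, though mild, obstacle.
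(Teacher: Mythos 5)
Your argument is correct and follows the paper's proof essentially verbatim: both reduce to the fact that the absence of $k$-gonal faces for $k\geqslant 5$ forces exactly $8$ triangular faces, bound the number of incident or quasi-incident vertices of each triangle by $9$, and conclude that at most $72 < V$ vertices can be affected. The only difference is that you spell out the per-face bound of $9$ (three vertices of the triangle plus at most two new vertices from each of its three edge-neighbours, since every face is a triangle or quadrilateral), whereas the paper delegates this to a figure.
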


\begin{proof}
Let $F$ denote the number of faces of $P$. Because there are no $k$-gonal faces with $k\geq 5$, the set of faces of $P$ contains $8$ triangles and $F-8$ quadrilaterals. 
\begin{figure}[ht]
        \centering 
        \includegraphics[width=\textwidth/4]{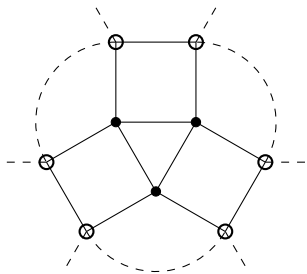}
        \caption{Incident and quasi-incident vertices of a $3$-gonal face.}
        \label{figure:qn-3-gonal-face}
    \end{figure}
Every triangular face is incident or quasi-incident to at most $9$ vertices (see Figure~\ref{figure:qn-3-gonal-face}). Therefore, at most $72$ vertices of $P$ can be incident or quasi-incident to triangular faces. 
\end{proof}

\subsection{Combinatorics of compact right-angled hyperbolic $3$-polytopes} \label{sec2}

Let $P$ be a compact right-angled hyperbolic $3$-polytope. Let $V$ denote the number of vertices, $E$ the number of edges, and $F$ the number of faces. The Euler characteristic of $P$ equals $V - E + F = 2$. Every vertex of a compact right-angled hyperbolic $3$-polytope is incident to three edges which implies $3V = 2E$ and $F = \frac{V}{2} + 2$. Let $p_k$ denote the number of $k$-gonal faces of $P$. By Theorem~\ref{theorem:Andreev} $p_3 = 0$ and $p_4 = 0$, and the previous equalities imply
$$
\sum_{k \geqslant 5} p_k = F, \qquad \sum_{k \geqslant 5} k p_k = 3V, \quad \text{and} \quad p_5 = 12 + \sum_{k \geqslant 7} (k - 6) p_k.$$

An edge $e$ and a vertex $v$ are \emph{incident} if $v$ is one of the two vertices that $e$ connects.  We say that an edge $e$ and a vertex $v$ are \emph{quasi-incident} if they are not incident, but at least one vertex of $e$ belongs to the same face as $v$.  

Since each vertex of a compact right-angled hyperbolic polyhedron $P$ is trivalent, we have four faces $f_1$, $f_2$, $f_3$, and $f_4$ arranged around each edge $e$ of $P$ as shown in Figure~\ref{figure:edge-and-adjacent-faces}. If $f_i$ is $k_i$-gonal, then number of vertices quasi-incident to $e$ is equal to $\sum_{i=1}^4 k_i - 10$. 

\begin{figure}[ht]
\centering
    \includegraphics{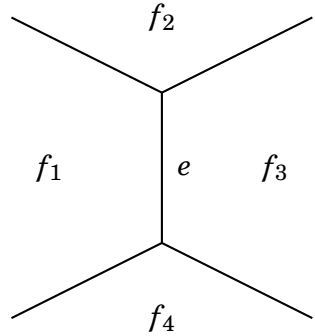}
    \caption{Edge $e$ and faces around.}
    \label{figure:edge-and-adjacent-faces}
\end{figure}

\begin{example}
An edge of fullerene C80 (see Figure~\ref{figure:C80}) has either $12$ or $13$ quasi-incident vertices. 
    \begin{figure}[h]
        \centering
        \includegraphics{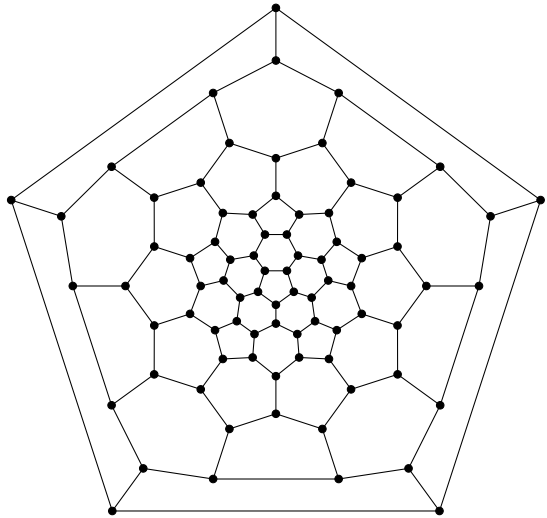}
        \caption{Fullerene C80.}
        \label{figure:C80}
    \end{figure}
Indeed, there are two types of edges. An edge of C80  either has two pentagons and two hexagons around itself, or has one pentagon and three hexagons. 
\end{example}

\begin{lemma} \label{lemma2.7}
    Let $P$ be a compact right-angled hyperbolic polytope with $V > 80$ vertices. Then there is an edge with at least $14$ quasi-incident vertices.
\end{lemma}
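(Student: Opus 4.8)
The plan is to run an averaging argument over the edges of $P$. For an edge $e$ with its four surrounding faces of sizes $k_1,\dots,k_4$, write $S(e)=\sum_{i=1}^4 k_i$; by the formula recorded just before the lemma, $e$ has exactly $S(e)-10$ quasi-incident vertices, so it suffices to exhibit an edge with $S(e)\ge 24$. I will show that the average of $S(e)$ over all $E$ edges already exceeds $23$ as soon as $V>80$. Since each $S(e)$ is an integer and the maximum is at least the average, some edge must then satisfy $S(e)\ge 24$, hence have at least $14$ quasi-incident vertices.

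First I would compute $\sum_e S(e)$ by double counting, splitting the four summands at each edge into the two faces containing $e$ and the two ``opposite'' faces (the third face at each endpoint, which does not contain $e$). Summing the first contribution over all edges gives $\sum_f (\deg f)\cdot(\#\text{edges of }f)=\sum_{k\ge5} k^2 p_k$. For the second, observe that at a trivalent vertex $v$ the face opposite a given edge runs over all three faces at $v$ as the edge runs over the three edges at $v$; summing over (edge, endpoint) pairs therefore yields $\sum_v(\text{sum of sizes of the three faces at }v)=\sum_f(\deg f)\cdot(\#\text{vertices of }f)=\sum_{k\ge5}k^2 p_k$ as well. Hence $\sum_e S(e)=2\sum_{k\ge5}k^2 p_k$.

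Next I would bound $\sum_k k^2 p_k$ from below using the relations $\sum_{k\ge5}p_k=F=\tfrac V2+2$ and $\sum_{k\ge5}kp_k=3V$ from Section~\ref{sec2}. The key observation is that $\sum_{k\ge5}(k-5)(k-6)p_k\ge 0$, since $(k-5)(k-6)\ge 0$ for every integer $k\ge 5$. Expanding $(k-5)(k-6)=k^2-11k+30$ gives $\sum_k k^2 p_k\ge 11\sum_k k p_k-30\sum_k p_k=33V-30(\tfrac V2+2)=18V-60$. Combining this with $E=\tfrac{3V}{2}$ produces the average
$$\frac{1}{E}\sum_e S(e)=\frac{4\sum_{k\ge5}k^2 p_k}{3V}\ge\frac{4(18V-60)}{3V}=24-\frac{80}{V},$$
which is strictly greater than $23$ exactly when $V>80$. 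Thus some edge satisfies $S(e)\ge 24$, i.e. has at least $14$ quasi-incident vertices.

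I expect the only delicate point to be the double-counting identity $\sum_e S(e)=2\sum_k k^2 p_k$, and in particular correctly charging the two ``opposite'' faces at the endpoints of each edge. A useful consistency check is that each face of size $k$ is counted exactly $2k$ times overall ($k$ times as a face containing an edge and $k$ times as an opposite face), and indeed $\sum_f 2\deg f=4E$ matches the total number $4E$ of (edge, slot) incidences. Everything else reduces to the face-vector relations, and the nonnegativity of $(k-5)(k-6)$ makes the all-pentagon-and-hexagon configuration the extremal case, exactly saturating the bound at $V=80$.
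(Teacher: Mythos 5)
Your proof is correct and follows essentially the same route as the paper: an averaging argument over the edges, double counting $\sum_i k_i$ via the two faces containing the edge plus the two opposite faces (each giving $\sum_{k\ge5}k^2p_k$), and reducing to the face-vector relations with the all-pentagon-and-hexagon case as the extremal one. The only cosmetic difference is that you average $S(e)$ instead of $q(e)=S(e)-10$ and package the lower bound on $\sum k^2p_k$ via the nonnegativity of $\sum_{k\ge5}(k-5)(k-6)p_k$, whereas the paper substitutes $p_5$ and $p_6$ explicitly; both yield the same bound $14-\tfrac{80}{V}>13$.
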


\begin{proof}
Let $q(e)$ be the number of vertices that are quasi-incident to an edge $e$. Then the average number of quasi-incident vertices in $P$ equals
\begin{multline*}
\frac{\sum_{e} q(e)}{E} = \frac{1}{E} \left[\sum_{k \geqslant 5} k (k - 2) p_k + \sum_{k \geqslant 5} k (k - 3) p_k\right] = \frac{2}{E} \sum_{k \geqslant 5} k^2 p_k - \frac{5}{E} \sum_{k \geqslant 5} k p_k = \\
        = \frac{2}{E} \sum_{k \geqslant 5} k^2 p_k - 10 = \frac{2}{E} \left[5^2 p_5 + 6^2 p_6 + \sum_{k \geqslant 7} k^2 p_k\right] - 10 = \\
        = \frac{2}{E}\left[5^2\,\left(12 + \sum_{k \geqslant 7} (k - 6) p_k\right) + 6^2\,\left(F - p_5 - \sum_{k \geqslant 7} p_k\right) + \sum_{k \geqslant 7} k^2 p_k\right] - 10 = \\
        = \frac{2}{E}\left[5^2\,\left(12 + \sum_{k \geqslant 7} (k - 6) p_k\right) + 6^2\,\left(\nicefrac{E}{3} - 10 - \sum_{k \geqslant 7} (k - 5) p_k\right) + \sum_{k \geqslant 7} k^2 p_k\right] - 10 = \\
        = 14 - \frac{120}{E} - \frac{2}{E} \sum_{k \geqslant 7} (k^2 - 11 k + 30) p_k \geqslant 14 - \frac{120}{E} = 14 - \frac{80}{V} > 13.
    \end{multline*}
    Therefore, there exists an edge with at least $14$ quasi-incident vertices. 
\end{proof}

\begin{corollary} \label{corollary:compact:fat-faces}
Let $P$ be a compact right-angled hyperbolic $3$-polytope with $V > 80$ vertices. There is an edge $e \in P$ with  $k_i$-gonal faces around, $i=1, \ldots, 4$, such that  $\sum_{i=1}^4 k_i  \geqslant 24$. 
\end{corollary}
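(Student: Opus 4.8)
The plan is to obtain the statement as an immediate consequence of Lemma~\ref{lemma2.7}, translating its conclusion about quasi-incident vertices into a statement about the sizes of the four faces surrounding an edge. The bridge between the two formulations is the count recorded in the paragraph preceding Lemma~\ref{lemma2.7}: since $P$ is trivalent, every edge $e$ is surrounded by exactly four faces $f_1, f_2, f_3, f_4$ (the two faces sharing $e$, together with the third face at each of the two endpoints of $e$), and if $f_i$ is $k_i$-gonal, then the number $q(e)$ of vertices quasi-incident to $e$ equals $\sum_{i=1}^4 k_i - 10$.

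First I would invoke Lemma~\ref{lemma2.7}: because $V > 80$, there exists an edge $e$ of $P$ with $q(e) \geqslant 14$. Next I would apply the identity $q(e) = \sum_{i=1}^4 k_i - 10$ to this particular edge, obtaining $\sum_{i=1}^4 k_i - 10 \geqslant 14$, and hence $\sum_{i=1}^4 k_i \geqslant 24$, which is exactly the asserted bound.

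Since the nontrivial averaging argument has already been carried out inside Lemma~\ref{lemma2.7}, I expect no genuine obstacle here; the corollary is purely a reformulation of that lemma in terms of face sizes rather than vertex counts. The only point deserving care is the correctness of the formula $q(e) = \sum_{i=1}^4 k_i - 10$, which one should confirm by a short inclusion--exclusion over the four faces $f_1, \dots, f_4$ (each adjacent pair among them shares an edge, and Andreev's conditions (3)--(4) of Theorem~\ref{theorem:Andreev} rule out any further coincidences among their vertices). This can be sanity-checked against the fullerene C80 example preceding the lemma: two pentagons and two hexagons around an edge give $5+5+6+6-10 = 12$, while one pentagon and three hexagons give $5+6+6+6-10 = 13$, matching the stated counts of $12$ and $13$.
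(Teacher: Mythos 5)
Your proposal is correct and is exactly the argument the paper intends: the corollary is the immediate translation of Lemma~\ref{lemma2.7} via the identity $q(e) = \sum_{i=1}^4 k_i - 10$ stated just before that lemma, which the paper leaves implicit. Your sanity check against the C80 example confirms the counting formula and matches the paper's own discussion.
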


\subsection{Combinatorics of right-angled hyperbolic $3$-polyhedra with both finite and ideal vertices}

Let $P$ be a right-angled hyperbolic $3$-polyhedron with $V_F$ finite and $V_{\infty}$ ideal vertices. Let $E$ denote its number of edges, and $F$ be the number of its faces. The Euler characteristic of $P$ is $V_{F}+V_{\infty} - E + F = 2$. Since every ideal vertex is incident to four edges and each finite vertex is incident to three edges, we get $3V_{F}+4V_{\infty} = 2E$. Hence $F=\frac{1}{2} V_F+V_{\infty}+2$. 

We say that two faces are \emph{neighbours} if they have a common vertex.

\begin{lemma} \label{lemma:ideal-finite}
Let $P$ be a finite volume right-angled hyperbolic $3$-polyhedron with $V_F$ finite and $V_\infty$ ideal vertices. 
\begin{enumerate}
\item If $V_F + V_{\infty}>15$ and $V_{\infty} \geq 1$, then there is a face $f \in P$  with at least $6$ neighbours.
\item Let $V_F + V_\infty >17$ and $V_\infty \geq 3$. If there is no face with  $\geq 7$ neighbours, then there are at least $7$ faces such that each of them has $6$ neighbours.
\item If $V_\infty \geq 6$ and there is a face  $f \in P$ with at most $5$ neighbours, then there is a face $f' \in P$ with at least $7$ neighbours.
\end{enumerate}
\end{lemma}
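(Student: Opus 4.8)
The plan is to prove each of the three parts by the same averaging strategy used in Lemma~\ref{lemma2.7} and Proposition~\ref{proposition:ideal-vertex-with-4-qn}, counting face-neighbour incidences and using the combinatorial identities for polyhedra with mixed finite and ideal vertices. Let me set up the key quantities. For a face $f$, let $n(f)$ denote the number of its neighbours, i.e. faces sharing at least one vertex with $f$. If $f$ is a $k$-gon, then $n(f)$ equals the total number of faces adjacent to the vertices of $f$ (other than $f$ itself). Since each finite vertex lies in exactly three faces and each ideal vertex in exactly four, a vertex of $f$ contributes either two or three new faces, but shared edges cause double-counting of the edge-adjacent faces. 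The first task will be to derive a clean formula for $\sum_f n(f)$ in terms of the $p_k$'s and of how many vertices of each face are ideal versus finite.

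For part (1), I would compute the average $\frac{1}{F}\sum_f n(f)$ and show it exceeds $5$, forcing some face to have $\geq 6$ neighbours. Here the hypothesis $V_\infty \geq 1$ should be used to guarantee the combinatorial bound on $F$ is tight enough; using $F = \frac{1}{2}V_F + V_\infty + 2$ together with $V_F + V_\infty > 15$ and the face-vector identities, I expect the averaging to yield a bound of the shape (average) $\geq 6 - c/F > 5$ once $F$ is large enough, exactly parallel to how Lemma~\ref{lemma2.7} produced $14 - 120/E$. First I would write $\sum_f n(f)$ as a sum over vertices weighted by how many face-pairs they generate, then substitute the relation $\sum_k k p_k = 3V_F + 4V_\infty$ (since this counts vertex-face incidences with ideal vertices in four faces) and the Euler relation, and simplify until only a manifestly nonnegative error term remains.

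For part (2), the assumption is that no face has $\geq 7$ neighbours, so every face has $n(f) \leq 6$; combined with the lower bound on the average $\sum_f n(f)/F$ coming from $V_F + V_\infty > 14$ and $V_\infty \geq 3$, I would argue by a counting (pigeonhole) estimate. If at most four faces had exactly $6$ neighbours and all others at most $5$, then $\sum_f n(f) \leq 6\cdot 4 + 5(F-4) = 5F + 4$, and I would show this contradicts the averaging lower bound $\sum_f n(f) \geq 5F + 5$ (or whatever slack the hypotheses provide), thereby forcing at least five faces with exactly $6$ neighbours. The role of $V_\infty \geq 3$ is presumably to push the constant term in the average high enough that the slack is at least $5$.

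For part (3), the structure is different: we are given a deficient face $f$ with $n(f) \leq 5$ and must produce a face with $\geq 7$ neighbours. The idea is that a face with few neighbours ``uses up'' very little of the total neighbour count, so the remaining faces must carry more than their share; by the same averaging identity, if every face had $\leq 6$ neighbours the total $\sum_f n(f)$ would be bounded above by $6F$ minus the deficit contributed by $f$, and I would show that $V_\infty \geq 6$ makes the averaging lower bound strictly exceed this ceiling, forcing some face to exceed $6$ neighbours. The main obstacle across all three parts is getting the neighbour-counting formula $\sum_f n(f)$ exactly right: unlike the quasi-incidence counts in the earlier lemmas, neighbours are counted per face (not per edge), so I must carefully handle the shared edges and shared vertices to avoid over- or under-counting, and I expect the cleanest route is to express $n(f)$ for a $k$-gon as (number of distinct faces meeting the $k$ vertices of $f$) and then sum, tracking separately the contributions of ideal and finite vertices since they differ in valence.
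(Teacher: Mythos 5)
Your proposal follows essentially the same route as the paper's own proof: the paper counts, for each face $f_i$ with $V_F^i$ finite and $V_\infty^i$ ideal vertices, exactly $2V_\infty^i+V_F^i$ neighbours (which is your ``two or three new faces per vertex minus the double-counted edge-neighbours''), sums to get $8V_\infty+3V_F$, divides by $F=\tfrac12 V_F+V_\infty+2$, and then runs precisely your three arguments: average $>5$ for part (1), a pigeonhole contradiction against the average for part (2), and the deficit argument (average $\geq 6$ once $2V_\infty-12\geq 0$) for part (3). The quantitative slack you anticipate is exactly what the hypotheses deliver, so the plan is sound and matches the paper.
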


\begin{proof}
Suppose that $P$ has $F$ faces. 
For a face $f_i \in P$, $i = 1, \ldots, F$, denote by $V_{F}^i$ the number of finite vertices in $f_i$ and by $V^i_{\infty}$ the number of ideal vertices in $f_i$.  Then the average number of neighbouring faces in $P$ is equal to
\begin{equation}
\frac{1}{F} \sum_{i} (2 V_{\infty}^i+ V_{F}^i) = \frac{1}{F}(8V_{\infty}+3V_{F}) = \frac{8V_{\infty}+3V_{F}}{V_{\infty}+\frac{1}{2}V_F+2}. \label{eqn100}
\end{equation} 

\underline{Part (1).} Our aim is to show that 
$$
\frac{8V_{\infty}+3V_{F}}{V_{\infty}+\frac{1}{2}V_F+2} > 5, 
$$
which is equivalent to 
$$
8 V_\infty + 3 V_F > 5 V_\infty + \frac{5}{2} V_F + 10 \
\Longleftrightarrow \
3 V_\infty + \frac{1}{2} V_F > 10.
$$
Using $V_F + V_{\infty} > 15$, we obtain 
$$
3 V_\infty + \frac{1}{2} V_F > 3 V_\infty + \frac{15}{2} - \frac{1}{2} V_\infty = \frac{5}{2} V_\infty + \frac{15}{2} \geq 10
$$
if $V_\infty \geq 1$. Thus, there is a face $f \in P$ with at least $6$ neigbouring faces.

\medskip

\underline{Part (2).} Let us use the formula~(\ref{eqn100}) for the average number $A$ of neighbours: 
$$
A = \frac{8V_{\infty}+3V_{F}}{V_{\infty}+\frac{1}{2}V_F+2} = 
\frac{6(V_{\infty}+\frac{1}{2}V_{F}+2)+2V_{\infty}-12}{V_{\infty}+\frac{1}{2}V_F+2} \geq 
\frac{6(V_{\infty}+\frac{1}{2}V_{F}+2)-6}{V_{\infty}+\frac{1}{2}V_F+2} = 6 -\frac{6}{V_{\infty}+\frac{1}{2}V_F+2}, 
$$
where we used $2 V_\infty - 6 \geq 0$. Since $V_\infty + V_F > 17$ and $V_\infty \geq 3$, we have  
$$
V_\infty + \frac{1}{2} V_F + 2 \geq 11 + \frac{1}{2} V_\infty \geq \frac{25}{2}.
$$ 
Hence the average number of neighbours satisfies the following inequality 
$A \geq 6 - \frac{12}{25} = \frac{138}{25}.$

Since $V_\infty + V_F > 17$ and $V_\infty \geq 3$, a polyhedron $P$ has $F=\frac{1}{2} V_F+V_{\infty}+2 \geq 11 + \frac{1}{2} V_\infty > 12$ faces. But $F$ is an integer number, so $F \geq 13$. Assume that $P$ has  $k \leq 6$ faces with $6$ neighbours. Hence $P$ has $F-k$ faces with at most $5$ neighbours, and an average $A$ of neighbours in $P$ satisfies the following inequality
$$
A \leq \frac{6 k + 5 (F - k)}{F} = 5 + \frac{k}{F} \leq 5 + \frac{6}{13} = \frac{71}{13}.
$$
Since $\frac{71}{13} < \frac{138}{25}$, we get a contradiction. Hence $P$ has at least $7$ faces such that each of them has $6$ neighbouring faces.  

\medskip

\underline{Part (3).} 
By the formula (\ref{eqn100}), using $V_\infty \geq 6$, we get the following inequality for the average number of neighbours:  
$$
A =  \frac{8V_{\infty}+3V_F}{V_{\infty}+\frac{1}{2}V_F+2} = \frac{6(V_{\infty}+\frac{1}{2}V_{F} + 2)+2V_{\infty}-12}{V_{\infty}+\frac{1}{2}V_F+2} \geq 6. 
$$
Since $A \geq 6$ and $f$ has at most $5$ neighbours, there is a face $f'$ with at least $7$ neighbours.
\end{proof}

\section{Proof of Theorem~\ref{theorem:main-ideal}} \label{sec3}

The Lobachevsky function is concave on the interval $[0, \frac{\pi}{2}]$ which implies that
$$\sum_{k=1}^m \Lambda\left(x_k\right) \leqslant m\,\Lambda\left(\frac{\sum_{k=1}^m x_k}{m}\right).
$$

Let $P$ be an ideal right-angled hyperbolic polyhedron and $v$ a vertex of $P$, which will further be called an \emph{apex}. For every face $f$ there is a unique projection $u$ of point $v$ to $f$. The projection will lie on the interior of $f$ unless $f$ meets one of the faces containing $v$. Projecting $u$ to the edges of $f$ will decompose $P$ into tetrahedra with three dihedral angles equal $\frac{\pi}{2}$, also known as \emph{orthoschemes}. Such a decomposition for the face formed by the vertices $v_1$, $v_2$, $v_3$, and $v_4$ is shown in Figure \ref{figure:ideal-face-triangulation}. Thus, we get eight tetrahedra having a common edge $vu$. Consider a tetrahedron formed by $v$, $u$, $w_4$, and $v_4$, where vertices  $v$ and $v_4$ are ideal, and vertices $u$ and $w_4$ are finite. Dihedral angles at edges $v w_4$, $w_4 u$ and $u v_4$ equal $\frac{\pi}{2}$. If dihedral angle at $vu$ equals $\alpha$, then dihedral angle at $v v_4$ equals $\frac{\pi}{2} - \alpha$ and dihedral angle at $v_4 w_4$ equals to $\alpha$. Thus, this tetrahedron s determined by $\alpha$, and we call $\alpha$ a \emph{parameter} of the tetrahedron. By \cite[Chapter~7]{Thurston},  volume of the tetrahedron formed by $v$, $u$, $w_4$, and $v_4$ equals $\frac{1}{2} \Lambda(\alpha)$.

\begin{figure}
    \centering
    
    \includegraphics[width=\textwidth/2-.5mm]{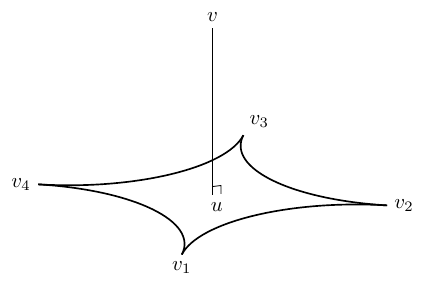}
    \includegraphics[width=\textwidth/2-.5mm]{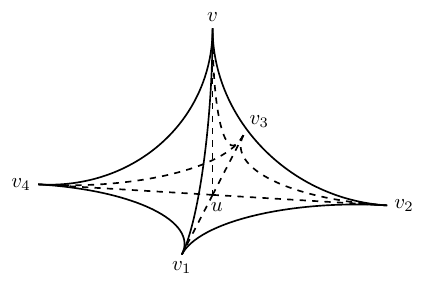}
    
    \includegraphics[width=\textwidth/2-.5mm]{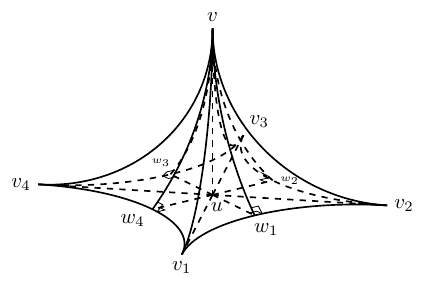}
    \includegraphics[width=\textwidth/2-.5mm]{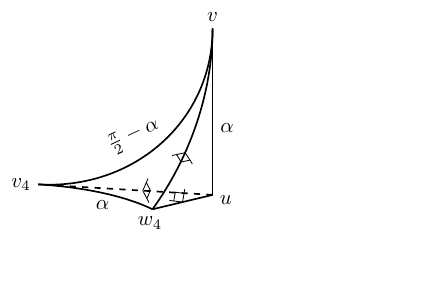}

    \caption{Decomposition of an ideal right-angled polyhedron.}
    \label{figure:ideal-face-triangulation}
\end{figure}

\begin{example}
Let $P$ be the antiprism $A(4)$ with the vertex set $V = \{ v_1, v_2, \ldots, v_8 \}$ .   
A decomposition of the $A(4)$ with apex at $v_1$ is shown in Figure \ref{figure:A4-triangulation}. Let us define a \emph{tetrahedral cone} $C(v)$ of the vertex $v$ as the union of tetrahedra of a decomposition containing $v$. Therefore, $A(4)$ splits in cones and 
$\Vol(A(4)) = \sum_{k = 2}^8 \Vol(C(v_k)).$
    
\begin{figure}[h]
    \centering

    \includegraphics[width=\textwidth/2-1mm]{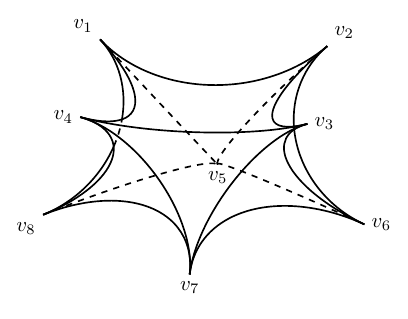}
    \includegraphics[width=\textwidth/2-1mm]{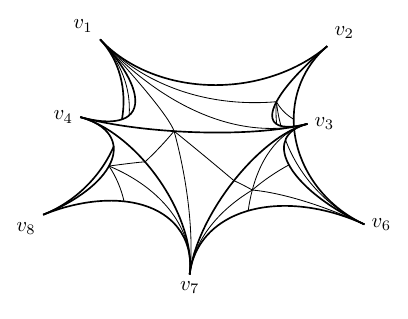}

    \caption{Antiprism $A(4)$ and its decomposition.}
    \label{figure:A4-triangulation}
\end{figure}

Vertices $v_2$, $v_4$, $v_5$, and $v_8$ are adjacent of $v_1$.    
The cone $C(v_4)$ consists of two tetrahedra with parameters $\alpha$ and $\beta$. Since the dihedral angle at the edge $v_1 v_4$ is $\frac{\pi}{2}$, then the sum $(\frac{\pi}{2} - \alpha) + (\frac{\pi}{2} - \beta)$ equals $\frac{\pi}{2}$ and $\alpha + \beta$ equals $\frac{\pi}{2}$. The concavity of $\Lambda$ implies
$$
\Vol(C(v_4)) = \frac{1}{2}\Lambda(\alpha) + \frac{1}{2}\Lambda(\beta) \leqslant \Lambda\left(\frac{\pi}{4}\right).
$$
The same holds for every vertex adjacent of $v_1$.  For more details see \cite[Proposition~5.2]{Atkinson}.
    
A similar argument provides $\Vol(C(v)) \leqslant 2\Lambda(\frac{\pi}{4})$ if $v$ is a quasi-adjacent of $v_1$ (i.e., $v_3$), and $\Vol(C(v)) \leqslant 4\Lambda(\frac{\pi}{4})$ for every other $v$ (i.e., $v_6$ and $v_7$). So
$$
\Vol(A(4)) \leqslant 4 \cdot \Lambda\left(\frac{\pi}{4}\right) + 1 \cdot 2 \Lambda\left(\frac{\pi}{4}\right) + 2 \cdot 4 \Lambda \left(\frac{\pi}{4} \right) = 14 \Lambda \left(\frac{\pi}{4}\right).
$$
\end{example}

\medskip

The previous example implicitly contains the proof of the following lemma.

\begin{lemma} \label{lemma:ideal-qn-volume-estimation}
    Let $P$ be an ideal right-angled hyperbolic polyhedron with $V$ vertices. If there is a vertex with $m$ quasi-adjacent vertices, then $\Vol(P) \leqslant \left(V - 4 - \frac{m}{2}\right) \cdot \frac{v_8}{2}.$
\end{lemma}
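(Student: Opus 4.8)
The plan is to take the vertex possessing $m$ quasi-adjacent vertices as the apex $v$ and to apply the tetrahedral-cone decomposition set up above, writing $\Vol(P) = \sum_{w \neq v} \Vol(C(w))$, where the sum runs over all vertices other than the apex. Everything then comes down to bounding each $\Vol(C(w))$ according to the combinatorial position of $w$ relative to $v$, together with a count of how many vertices occupy each position.

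The three estimates I would use are exactly those isolated in the antiprism example. For $w$ adjacent to $v$, the two orthoschemes of $C(w)$ lying along the edge $vw$ have parameters summing to $\frac{\pi}{2}$ (the dihedral angle of $P$ at $vw$ being $\frac{\pi}{2}$), so concavity of $\Lambda$ gives $\Vol(C(w)) \leq \Lambda(\frac{\pi}{4}) = \frac{v_8}{8}$. For $w$ quasi-adjacent to $v$ the analogous pairing of orthoschemes around the edges at $w$ yields $\Vol(C(w)) \leq 2\Lambda(\frac{\pi}{4}) = \frac{v_8}{4}$, and for every other vertex one pairs the eight orthoschemes of $C(w)$ into four pairs along the four edges incident to $w$ and obtains the universal bound $\Vol(C(w)) \leq 4\Lambda(\frac{\pi}{4}) = \frac{v_8}{2}$. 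I would present these as already justified by the example and by \cite[Proposition~5.2]{Atkinson}.

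It remains to count. Since $P$ is ideal and right-angled, every vertex---in particular the apex---is incident to exactly four edges, so $v$ has precisely four adjacent vertices; by hypothesis it has $m$ quasi-adjacent ones, and these two classes are disjoint by definition and both avoid $v$. Hence the vertices other than $v$ split into $4$ adjacent, $m$ quasi-adjacent, and $V-5-m$ remaining ones (the last count being automatically non-negative). Summing the three bounds gives
$$\Vol(P) \leq 4\cdot\frac{v_8}{8} + m\cdot\frac{v_8}{4} + (V-5-m)\cdot\frac{v_8}{2} = \left(V-4-\frac{m}{2}\right)\frac{v_8}{2},$$
which is the asserted inequality.

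The genuinely substantive point is the justification of the three per-cone bounds for an arbitrary ideal right-angled polyhedron, rather than merely for $A(4)$: one must verify that the orthoschemes of $C(w)$ group correctly into pairs whose parameters are constrained to sum to $\frac{\pi}{2}$ by the right dihedral angles, and that the pairs ``missing'' in the adjacent and quasi-adjacent cases are exactly those associated with edges and faces shared with the apex. Once these facts are secured, the disjointness of the adjacency classes and the degree-four property of ideal vertices make the bookkeeping immediate and the arithmetic reduces as displayed.
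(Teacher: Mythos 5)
Your proposal is correct and follows essentially the same route as the paper: choose the vertex with $m$ quasi-adjacent vertices as the apex, bound the cone volumes by $\Lambda(\frac{\pi}{4})$, $2\Lambda(\frac{\pi}{4})$, and $4\Lambda(\frac{\pi}{4})$ for adjacent, quasi-adjacent, and remaining vertices respectively, and sum using the count $4 + m + (V-5-m)$. The paper likewise leans on the $A(4)$ example and Atkinson's Proposition~5.2 for the per-cone estimates, so your closing caveat about verifying the orthoscheme pairings in general is a fair observation but not a departure from the paper's argument.
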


\begin{proof}
Every vertex has exactly four adjacent vertices. This fact provides
$$
\Vol(P) \leqslant (V - 1 - 4 - m) \cdot 4\Lambda \left(\frac{\pi}{4} \right) + m \cdot 2\Lambda \left(\frac{\pi}{4}\right) + 4 \cdot \Lambda \left(\frac{\pi}{4} \right) = \left(V - 4 - \frac{m}{2}\right) \cdot 4\Lambda \left(\frac{\pi}{4}\right).
$$
\end{proof}

\medskip

To prove item (3) of Theorem~\ref{theorem:main-ideal} we shall sum up the volumes of the tetrahedra not over the vertices but over the faces. A \emph{tetrahedral cone} $C(f)$ of the face $f$ is a union of the decomposition tetrahedra having a part of $f$ as a face. Such a cone is shown at the bottom left in Figure~\ref{figure:ideal-face-triangulation}.

\begin{lemma} \label{lemma:ideal-face-decomposition-estimation}
Let $P$ be a decomposed ideal right-angled hyperbolic polyhedron and $f$ a $k$-gonal face of $P$ that does not contain the apex. If $f$ is a quasi-incident to the apex, then
$$
\Vol(C(f)) \le (k - 1)\,\Lambda\Bigg(\frac{\pi}{2k - 2}\Bigg).
$$
If $f$ is not quasi-incident to the apex, then
$\Vol(C(f)) \le k\,\Lambda\left(\frac{\pi}{k}\right).$
\end{lemma}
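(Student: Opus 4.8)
The plan is to realize $C(f)$ as a union of orthoschemes and apply the concavity of $\Lambda$ to their parameters, exactly as in the computation of $\Vol(A(4))$ above. First I would recall the structure of the decomposition: dropping the apex $v$ to the foot $u$ of the perpendicular onto the plane of $f$ and then projecting $u$ to the edges of $f$ cuts $C(f)$ into orthoschemes, each of volume $\frac{1}{2}\Lambda(\alpha_i)$, where $\alpha_i$ is the dihedral angle along the common central edge $vu$. The crucial observation is that, since $vu$ is orthogonal to the plane of $f$, each parameter $\alpha_i$ equals the planar angle at $u$ subtended by the corresponding triangle of the face. In particular each $\alpha_i$ is an angle of a planar hyperbolic right triangle, hence lies in $(0,\tfrac{\pi}{2})$ where $\Lambda$ is concave, and the $\alpha_i$ sum to the total angle swept out around $u$ inside $f$.

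Next I would split into the two cases according to where $u$ lands. If $f$ is not quasi-incident to the apex, then $u$ lies in the interior of $f$, the $k$-gon is cut into $2k$ orthoschemes, and the central angles sweep a full turn, so $\sum_{i=1}^{2k}\alpha_i = 2\pi$. The concavity inequality then yields
$$\Vol(C(f)) = \frac{1}{2}\sum_{i=1}^{2k}\Lambda(\alpha_i) \le \frac{1}{2}\cdot 2k\cdot\Lambda\!\left(\frac{2\pi}{2k}\right) = k\,\Lambda\!\left(\frac{\pi}{k}\right).$$
If instead $f$ is quasi-incident to $v$, then $f$ shares an edge $e$ with a face $f'$ containing $v$; because the dihedral angle along $e$ equals $\frac{\pi}{2}$, the plane of $f'$ is orthogonal to the plane of $f$, so orthogonal projection collapses $f'$ onto the line carrying $e$ and the foot $u$ therefore lands on $e$. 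The two orthoschemes sitting on $e$ then degenerate and only $2(k-1)$ survive, while the angle swept around the boundary point $u$ inside $f$ is a straight angle, so $\sum_{i=1}^{2(k-1)}\alpha_i = \pi$. Concavity gives
$$\Vol(C(f)) = \frac{1}{2}\sum_{i=1}^{2(k-1)}\Lambda(\alpha_i) \le \frac{1}{2}\cdot 2(k-1)\cdot\Lambda\!\left(\frac{\pi}{2(k-1)}\right) = (k-1)\,\Lambda\!\left(\frac{\pi}{2k-2}\right).$$

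The routine part is the two concavity estimates, which are immediate once the number of orthoschemes and the central-angle sum have been pinned down. The step I expect to be the main obstacle is the geometric claim in the quasi-incident case: that the foot $u$ of the perpendicular from the ideal apex $v$ actually lands on the boundary edge $e$ (and not merely on its extension), so that the angle sum collapses from $2\pi$ to $\pi$ and two orthoschemes vanish. This is precisely the phenomenon flagged before the lemma (``the projection will lie on the interior of $f$ unless $f$ meets one of the faces containing $v$''), and establishing it cleanly requires combining the right-angle condition along $e$ with the position of $v$ as a vertex of the orthogonal face $f'$.
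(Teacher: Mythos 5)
Your argument is correct and is essentially the paper's own proof: the same decomposition of $C(f)$ into $2k$ (resp. $2k-2$) orthoschemes of volume $\tfrac12\Lambda(\alpha_i)$ with angle sum $2\pi$ (resp. $\pi$) around the foot $u$, followed by concavity of $\Lambda$. Your justification that orthogonality of the adjacent face forces $u$ onto the line of the shared edge is in fact more detailed than the paper, which simply asserts where the projection lands.
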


\begin{proof}
    If $f$ is quasi-incident to the apex, then the projection of the apex does not lie inside $f$ and the cone $C(f)$ contains $2k - 2$ tetrahedra of the decomposition (like $v_3 v_4 v_7$ in Figure~\ref{figure:A4-triangulation}) and
 $$
 \Vol(C(f)) = \sum_{i = 2}^{2k - 1} \frac{1}{2} \Lambda(\alpha_i) \leqslant (k - 1)\,\Lambda\Bigg(\frac{\pi}{2k - 2}\Bigg), \quad  \text{ where } \sum_{i = 2}^{2k - 1} \alpha_i = \pi.
 $$

If $f$ is not quasi-incident to the apex, then the projection of the apex lies inside $f$ and the cone $C(f)$ contains $2k$ tetrahedra of the decomposition (like $v_3 v_6 v_7$ in Figure~\ref{figure:A4-triangulation}) and
 $$\Vol(C(f)) = \sum_{i = 1}^{2k} \frac{1}{2} \Lambda(\alpha_i) \leqslant k\,\Lambda\Bigg(\frac{\pi}{k}\Bigg), \quad \text{ where } \sum_{i = 1}^{2k} \alpha_i = 2\pi.
 $$
 The lemma is proved. 
\end{proof}

\medskip

Now let us prove Theorem~\ref{theorem:main-ideal}.

\begin{proof}
 Lemma~\ref{lemma:ideal-qn-volume-estimation}, Proposition~\ref{proposition:ideal-vertex-with-4-qn}, and Remark~\ref{remark:ideal-k-face-qn} provide items (1) and (2) of Theorem~\ref{theorem:main-ideal}.
    
    
Let $P$ be an ideal right-angled hyperbolic polyhedron with $V$ vertices and $F$ faces and without $k$-gonal faces for every $k \geqslant 5$. Then Proposition~\ref{proposition:ideal-isolated-vertex} and Lemma~\ref{lemma:ideal-face-decomposition-estimation} imply
 $$
 \Vol(P) \leqslant 8 \cdot 3 \Lambda\left(\frac{\pi}{3}\right) + 8 \cdot (4 - 1) \Lambda\left(\frac{\pi}{2 \cdot 4 - 2}\right) + (F - 20) \cdot 4\Lambda\left(\frac{\pi}{4}\right) = (V - 18) \cdot 4\Lambda\left(\frac{\pi}{4}\right) + 24 \cdot \left[\Lambda\left(\frac{\pi}{3}\right) + \Lambda\left(\frac{\pi}{6}\right)\right].
 $$ 
To complete the proof we recall that  $v_8 = 8 \Lambda \left( \frac{\pi}{4} \right)$ and $v_3 = 3 \Lambda \left( \frac{\pi}{3} \right) = 2 \Lambda \left( \frac{\pi}{6} \right)$.
\end{proof}

\section{Proof of Theorem~\ref{theorem:main-compact}} \label{sec4}

\subsection{Proof of part (1)}
Let us use the enumeration of faces as in~Figure~\ref{figure:edge-and-adjacent-faces}, with $f_1$ and $f_3$ containing $e$.  Since $V > 80$, then by Corollary~\ref{corollary:compact:fat-faces} there is an edge $e \in P$ such that for $k_i$-gonal faces $f_i$, $i=1, \ldots, 4$ we have  $\sum_{i=1}^4 k_i  \geq 24$. 

Let $P'$ be a polyhedron obtained by gluing $P$ with its image under reflection  in the plane passing through the face $f_1$.  Then $P'$ has $V' = 2V - 2k_1$ vertices. Denote by $f_2'$ a $(2k_2 - 4)$-gonal face of $P'$ containing $f_2$, by $f_3'$ a $(2k_3 - 4)$-gonal face of $P'$ containing $f_3$, and by $f_4'$ a $(2k_4 - 4)$-gonal face of $P'$ containing $f_4$. 

Recall the following volume bound from~\cite{EV20-2}.

\begin{lemma}[{\cite[Corollary~3.2]{EV20-2}}] \label{lemma:compact:faces-size-vol-estimation}
Let $P$ be a compact right-angled hyperbolic $3$-polyhed\-ron with $V$ vertices. Let $f_1$, $f_2$, and $f_3$ be three faces of $P$ such that $f_2$ is adjacent to both $f_1$ and $f_3$, and $f_i$ is $k_i$-gonal for $i = 1, 2, 3$. Then the following formula holds: 
$$ 
\Vol(P) \leqslant (V - k_1 - k_2 - k_3 + 4) \cdot \frac{5 v_3}{8}. \label{eqn2}
$$
\end{lemma}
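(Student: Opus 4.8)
The plan is to adapt Atkinson's apex cone--decomposition, set up in Section~\ref{sec3} for the ideal case, to the compact setting, and then to choose the apex so that the three faces $f_1,f_2,f_3$ are ``absorbed''. First I would fix a vertex $v$ of $P$ as the apex; since $P$ is simple, $v$ lies on exactly three faces, and projecting $v$ onto every other face and then onto the edges of that face cuts $P$ into compact orthoschemes, grouped into tetrahedral cones $C(f)$ over the faces $f\not\ni v$, exactly as in Lemma~\ref{lemma:ideal-face-decomposition-estimation}. The combinatorial pattern is the same as in the ideal case: a face not meeting the apex gives $2k$ orthoschemes, and a face quasi-incident to the apex gives $2k-2$.

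Next I would establish the compact analogue of the per-cone bound. Here the orthoschemes are genuine finite tetrahedra rather than the one--ideal--vertex pieces of volume $\tfrac12\Lambda(\alpha)$, so the first task is to bound the volume of a single such orthoscheme by the appropriate value of a Lobachevsky--type function and then, using concavity of $\Lambda$ on $[0,\tfrac\pi2]$ together with the angle relations $\sum\alpha_i=2\pi$ (resp. $\pi$) around the projection point, to bound $\Vol(C(f))$ for a $k$--gonal face. Summing these bounds over all faces and using the compact relations $\sum_{k} k\,p_k = 2E = 3V$ and $F=\tfrac{V}{2}+2$ from Section~\ref{sec2} should reproduce the density $\tfrac{5v_3}{8}$ per vertex, i.e. the baseline $\Vol(P)\le \tfrac{5v_3}{8}V$, with the cones over the apex's own faces contributing $0$.

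To gain the stated improvement I would take the apex on the middle face $f_2$: then $C(f_2)=0$, while $f_1$ and $f_3$, each sharing an edge with $f_2$, become quasi-incident and contribute the smaller $2k_i-2$ count. Because $f_1\cap f_2$ and $f_2\cap f_3$ are disjoint edges, the union $f_1\cup f_2\cup f_3$ carries exactly $k_1+k_2+k_3-4$ vertices, and the point of the computation is to show that these are precisely the vertices whose orthoschemes are removed or reduced, so that the total saving is $(k_1+k_2+k_3-4)\cdot\tfrac{5v_3}{8}$. When the three faces do not all meet a single vertex a single apex sees only part of them; in that case I would instead reduce to the favourable configuration by a reflection (doubling) argument of the same type this paper uses to deduce Theorem~\ref{theorem:main-compact}(1) from the lemma, transferring a bound for three faces through a common vertex to the general path.

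The main obstacle is two--fold. The geometric half is the sharp volume bound for the compact orthoscheme that yields the constant $\tfrac{5v_3}{8}$: unlike the ideal pieces, these tetrahedra have all vertices finite, so identifying the maximizing shape and arranging the estimate so that concavity of $\Lambda$ applies cleanly is the delicate step. The combinatorial half is the bookkeeping that pins the saving to exactly $k_1+k_2+k_3-4$ rather than $k_1+k_2+k_3-5$ (the count for three faces through a common vertex): one must verify that for a path of three faces every one of the $k_1+k_2+k_3-4$ incident vertices is accounted for, which is where the choice of apex, or the reduction by doubling, has to be made precise.
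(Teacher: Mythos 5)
First, note that the paper gives no proof of this statement: it is imported as a black box from \cite{EV20-2} (Corollary~3.2 there), and everything surrounding it --- the derivation of Theorem~\ref{theorem:main-compact}(1) from the lemma in Section~\ref{sec4}, the companion bound of \cite{EV20_1}, and the whole of Section~\ref{sec5} --- runs on one technique: double the polyhedron along a well-chosen face, observe how the neighbouring faces grow into $(2k-4)$-gons, and apply Atkinson's closed-form upper bound $\Vol(Q)<\frac{5v_3}{8}W-\frac{25}{4}v_3$ to the result (iterating if necessary). That is the route the cited proof takes. Your proposal is genuinely different: you want to reprove the lemma by transplanting the apex/orthoscheme decomposition of Section~\ref{sec3} to the compact setting. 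Unfortunately the step you yourself flag as ``the delicate step'' is not a deferrable technicality; it is the whole difficulty, and I do not believe it can be completed in the form you describe. In the ideal case each orthoscheme has an ideal edge and volume exactly $\frac12\Lambda(\alpha)$ for a \emph{single} angle $\alpha$, so Lemmas~\ref{lemma:ideal-qn-volume-estimation} and~\ref{lemma:ideal-face-decomposition-estimation} are just Jensen's inequality for the concave function $\Lambda$ together with the angle sums $\pi$ or $2\pi$ around the projection point. A compact orthoscheme with three right dihedral angles is not a one-parameter object: its volume is a signed combination of Lobachevsky values in several angles, there is no single concave function of one variable to apply Jensen to, and --- decisively --- the density $\frac{5v_3}{8}$ per vertex is a strict bound attained only asymptotically, not the supremum of any individual cone $C(v)$ or $C(f)$ in such a decomposition. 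Summing per-piece maxima therefore cannot reproduce the constant; it emerges from a limiting (doubling) argument, not from maximizing pieces.

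A secondary problem is the bookkeeping you defer to the end. The hypothesis only requires $f_2$ to be adjacent to $f_1$ and to $f_3$; it does not force the edges $f_1\cap f_2$ and $f_2\cap f_3$ to be disjoint, so the three faces may share a common vertex and then carry only $k_1+k_2+k_3-5$ vertices. Your proposed remedy --- prove the common-vertex case with the apex at that vertex and transfer to the general path by doubling --- points the wrong way: the common-vertex configuration covers \emph{fewer} vertices, so the required saving of $k_1+k_2+k_3-4$ units of $\frac{5v_3}{8}$ cannot be accounted for by ``one unit per covered vertex'' there, which again signals that the per-vertex heuristic is not how this bound arises. The proof that actually works needs no geometry of compact orthoschemes at all: double $P$ along $f_2$, so that $f_1$ and $f_3$ become $(2k_1-4)$- and $(2k_3-4)$-gonal faces of a polytope with $2V-2k_2$ vertices, and feed the result into the lower-complexity versions of the same estimate obtained by further doubling plus Atkinson's bound --- exactly the manipulation Section~\ref{sec4} performs one level up when it passes from $P$ to $P'$.
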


Applying Lemma~\ref{lemma:compact:faces-size-vol-estimation} to the polyhedron $P'$ we get 
$$
2\Vol(P) = \Vol(P') \leqslant \left(2V - 2k_1 - (2k_2 - 4) - (2k_3 - 4) - (2k_4 - 4) + 4\right) \cdot \frac{5 v_3}{8}, 
$$
whence 
$$
\Vol(P) \leqslant (V - k_1 - k_2 - k_3 - k_4 + 8) \cdot \frac{5 v_3}{8} \leqslant (V - 16) \cdot \frac{5 v_3}{8} = \frac{5 v_3}{8} \cdot V - 10 v_3, 
$$
where we used   inequality  $\sum_{i=1}^4 k_i  \geq 24$. 

\subsection{Proof of part (2)}
Denote $k_1 = k$. Since any face of $P$ has at least 5 sides, we have $k_i \geq 5$ for $i=2,3,4$. Then, from preceding inequality we get 
$$
\Vol(P) \leqslant (V - k - 5 - 5 - 5 + 8) \cdot \frac{5 v_3}{8} = \frac{5 v_3}{8} \cdot V - \frac{5k + 35}{8} v_3.
$$ 
Thus, Theorem~\ref{theorem:main-compact} is proved. 


\section{Proof of Theorem~\ref{theorem:main-compact-ideal}} \label{sec5}

Observe that formula (\ref{eqn1}) implies the following upper bound holds for a polyhedron $P$ with $V_\infty$ ideal and $V_F$ finite vertices: 
$\Vol(P) <  \frac{v_8}{2} \cdot V_\infty + \frac{5v_3}{8} \cdot V_F - \frac{v_8}{2}.$

\medskip 

Let $P^1$ denote the polyhedron $P$, and let $k_\infty^1$, resp. $k_F^1$, be the number of ideal, resp. finite, vertices of a face $f^1$ of $P^1$. Consider the union $P^2$ of two copies of $P^1$ glued along the face $f^1$. Then $P^2$ is right--angled with $V^2_\infty = 2V_\infty - k^1_\infty$ ideal vertices and $V^2_F = 2V_F - 2k^1_F$ finite vertices. Then applying to $P^2$ the upper bound from (\ref{eqn1}), we get 
$$
\Vol(P) = \frac{\Vol(P^2)}{2} < \frac{v_8}{2} \cdot V_\infty + \frac{5v_3}{8} \cdot V_F - \left(  \frac{v_8}{4} \cdot k^1_\infty  + \frac{5v_3}{8} \cdot k^1_F + 
\frac{v_8}{4} \right) =  \frac{v_8}{2} \cdot V_\infty +  \frac{5v_3}{8} \cdot V_F - c_1 - \frac{v_8}{4}, 
$$ 
where 
$c_1 =  \frac{v_8}{4} \cdot k^1_\infty  +  \frac{5v_3}{8} \cdot k^1_F$.

\medskip

Let $k_\infty^2$, resp. $k_F^2$, be the number of ideal, resp. finite, vertices of a face $f^2$ of $P^2$. Consider the union $P^3$ of two copies of $P^2$ glued along the face $f^2$. Then $P^3$ is right-angled with 
$V^3_\infty = 4V_\infty -2 k^1_\infty - k^2_\infty$
ideal vertices and 
$V^3_F = 4V_F - 4k^1_F - 2k^2_F$
finite vertices. Applying to $P^3$ the upper bound from (\ref{eqn1}), we get
$$
\Vol(P) = \frac{\Vol(P^3)}{4} < \frac{v_8}{2} \cdot V_\infty + \frac{5v_3}{8} \cdot V_F - c_1 -  c_2 - \frac{v_8}{8}, 
$$ 
where 
$c_2 = \frac{v_8}{8} \cdot k^2_\infty + \frac{5v_3}{16} \cdot k^2_F$.

\medskip

Continuing the process inductively, we obtain
\begin{equation} 
\Vol(P) < \frac{v_8}{2} \cdot V_\infty + \frac{5v_3}{8} \cdot V_F - c_1 -  c_2 - \ldots - c_n - \frac{v_8}{2^{n+1}},  \label{eqn200}
\end{equation} 
where for $i=1, \ldots, n$ we have 
$c_i = \frac{v_8}{2^{i+1}} \cdot  k^i_\infty + \frac{5v_3}{2^{i+2}} \cdot k^i_F$.

\medskip

Suppose that for every $i = 1, 2, \ldots,$ the face $f^i$ is \textit{not} an ideal triangle and has at least $6$ neighbouring faces (as Lemma~\ref{lemma5.2} shows later on, we can choose a face $f^i$ of $P^i$ with this property for any $i$). Then the value $c_i$ is minimal for $k^i_\infty = 2$ and $k^i_F = 2$. In this case we compute
$$
\sum_{i=1}^n c_i = \left(v_8 +  \frac{5v_3}{2}\right) \cdot \left(\sum_{i=1}^n \frac{1}{2^i}\right) = \left(v_8 +  \frac{5v_3}{2}\right) \cdot \left(1 - \frac{1}{2^n}\right). 
$$
Taking the limit $n \to \infty$ in (\ref{eqn200}), we obtain 
$$
\Vol(P) \leq \frac{v_8}{2} \cdot V_\infty + \frac{5v_3}{8} \cdot V_F - \left( v_8 + \frac{5v_3}{2} \right).
$$ 

\medskip

Let $N_6(P)$ denote the set of faces of a polyhedron $P$ with the following property: $f \in N_6(P)$ if $f$ has at least $6$ neighbouring faces. This set is non-empty by part (1) of Lemma~\ref{lemma:ideal-finite}.

\begin{lemma} \label{lemma5.2}
If $V_\infty + V_F > 17$ then for any $i=1, \ldots, n$ the set $N_6\left(P^i\right)$ contains at least one face that is not an ideal triangle.
\end{lemma}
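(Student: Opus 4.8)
The plan is to argue by induction on $i$, reducing at each step to Lemma~\ref{lemma:6} applied to $P^i$ rather than to $P$. The base case $i=1$ is exactly Lemma~\ref{lemma:6} for $P^1=P$, whose hypothesis $V_\infty+V_F>14$ is assumed. Since doubling a finite-volume right-angled hyperbolic polyhedron along a face again produces a finite-volume right-angled hyperbolic polyhedron, each $P^i$ is of the same type, so Lemma~\ref{lemma:6} will immediately furnish a non-ideal-triangle face in $N_6(P^i)$ as soon as we know $V^i_\infty+V^i_F>14$. Thus the whole statement reduces to controlling the number of vertices along the doubling process.

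The key claim I would isolate is that the total number of vertices does not decrease under doubling. Using the counts recorded in Section~\ref{sec5}, if $f^i$ has $k^i_\infty$ ideal and $k^i_F$ finite vertices, then $V^{i+1}_\infty=2V^i_\infty-k^i_\infty$ and $V^{i+1}_F=2V^i_F-2k^i_F$, whence
\[
V^{i+1}_\infty+V^{i+1}_F=2\bigl(V^i_\infty+V^i_F\bigr)-\bigl(k^i_\infty+2k^i_F\bigr).
\]
Hence the total count is non-decreasing exactly when $V^i_\infty+V^i_F\ge k^i_\infty+2k^i_F$, and since the ideal vertices of $f^i$ are among all ideal vertices of $P^i$, this is equivalent to the purely combinatorial inequality that the number of vertices of $P^i$ lying \emph{off} $f^i$ is at least the number of \emph{finite} vertices of $P^i$ lying \emph{on} $f^i$.

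Proving this inequality is the step I expect to be the crux, and it is where the hyperbolic geometry enters. I would establish it for an arbitrary face $f$ of any finite-volume right-angled polyhedron by constructing an injection from the finite vertices of $f$ into the vertices off $f$. A finite vertex $v$ of $f$ is trivalent, with two edges running along $f$ and exactly one remaining edge $e_v$ leaving $f$; by convexity its far endpoint $w(v)$ lies off $f$. Because every dihedral angle is $\tfrac{\pi}{2}$, the two faces meeting along $e_v$ are both orthogonal to the plane $\Pi$ spanned by $f$, so $e_v$ is orthogonal to $\Pi$, i.e. $v$ is the nearest-point projection of $w(v)$ onto $\Pi$. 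As the nearest-point projection $\overline{\HH^3}\to\Pi$ onto a totally geodesic plane is well defined (including on the ideal boundary, away from $\partial\Pi$), the equality $w(v_1)=w(v_2)$ would force $v_1=v_2$; hence $v\mapsto w(v)$ is injective, giving the inequality.

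Combining these ingredients, the monotonicity yields $V^i_\infty+V^i_F\ge V^1_\infty+V^1_F=V_\infty+V_F>14$ for every $i$, so Lemma~\ref{lemma:6} applies to each $P^i$ and produces the desired non-ideal-triangle face in $N_6(P^i)$, completing the induction. The only genuinely geometric input is the orthogonality of $e_v$ to $\Pi$ together with the uniqueness of perpendiculars to a totally geodesic plane in $\HH^3$; all remaining steps are the vertex-count bookkeeping already prepared in Section~\ref{sec5}.
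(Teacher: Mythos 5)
Your proof is correct, but it takes a genuinely different route from the paper's. The paper invokes Lemma~\ref{lemma:6} only for the base case $i=1$; for $i\ge 2$ it works directly with the combinatorics of the doubling: the faces of $P^i$ obtained by merging two copies of a face adjacent to $f^{i-1}$ automatically acquire a sixth neighbour (each face already has at least five by Theorem~\ref{theorem:Andreev}), so these merged faces lie in $N_6(P^i)$; assuming all of $N_6(P^i)$ consists of ideal triangles, the paper then deduces that $f^{i-1}$ has alternating ideal and finite vertices, counts at least six ideal vertices of $P^i$, rules out the octahedron by a volume estimate, and derives a contradiction from part (3) of Lemma~\ref{lemma:ideal-finite}. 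You instead propagate the hypothesis of Lemma~\ref{lemma:6} itself, showing $V^i_\infty+V^i_F$ is non-decreasing under doubling, which reduces every step to the base case. Your key inequality $V^i_\infty+V^i_F\ge k^i_\infty+2k^i_F$ rests on a geometric injection: at a trivalent vertex $v$ of the glued face $f$, the unique edge leaving $f$ lies in both lateral faces at $v$, each orthogonal to the plane of $f$, hence that edge is the perpendicular to that plane at $v$, and uniqueness of the perpendicular through a point (finite or ideal) not on the plane or its ideal boundary makes $v\mapsto w(v)$ injective into the vertices off $f$. This is sound --- it is the same perpendicularity that makes the doubling right-angled in the first place --- and your bookkeeping matches the vertex counts recorded in Section~\ref{sec5}. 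What each approach buys: yours is shorter, avoids the octahedron-exclusion volume estimates entirely, and yields the reusable fact that the total vertex count never drops under doubling; the paper's argument past the base case is purely combinatorial and needs no perpendicularity input. One point worth making explicit in your write-up: since $k^i_\infty\le V^i_\infty$, the ideal vertex count is also non-decreasing, so any implicit lower bound on $V_\infty$ that Lemma~\ref{lemma:6} requires (its proof relies on parts (2)--(3) of Lemma~\ref{lemma:ideal-finite}, which assume $V_\infty\ge 3$ and $V_\infty\ge 6$) is inherited by every $P^i$ at least as well as by $P^1$.
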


\begin{proof} 
Let us observe that for any $i \geq 1$ the polyhedron $P^i$ is not an octahedron. Indeed, this holds true for $P^1 = P$ since  $V^1_{\infty} = V_{\infty}$, $V^1_F = V_F$ and $V_\infty^1 + V_F^1 > 17$. 
Assume that $P^2$ is an octahedron. Then $2 V_\infty^1 \ge V_\infty^2 = 6$ and 
$$
\Vol\left(P^2\right) = 2 \cdot \Vol\left(P^1\right) \ge 2 \cdot \frac{4 V_{\infty}^1 + V_F^1 - 8}{32} \cdot v_8 >  \frac{4 V_{\infty}^1 + 17 - V_{\infty}^1 - 8}{16} \cdot v_8 \geq \frac{18 \cdot v_8}{16} > v_8,
$$
which is a contradiction. Finally, let us show that for any $i \geq 3$ polyhedron $P^i$ is not an octahedron either. If $P^i$ was an octahedron, then it would hold that $V_\infty^1 \ge 1$ and by inequality~(\ref{eqn1}) we would obtain
$$
\Vol\left(P^i\right) \ge 2^{i - 1} \cdot \frac{4 V_{\infty}^1 + V_F^1 - 8}{32} \cdot v_8 > 2^{i - 6} \cdot (4 V_{\infty}^1 + 17 - V_{\infty}^1 - 8) \cdot v_8 \geq 12 \cdot 2^{i - 6} \cdot v_8  = \frac{3}{2} \cdot 2^{i-3} \cdot v_8 > v_8,
$$
which is again a contradiction.

\medskip 

Let $i = 1$ and assume that all faces from  $N_6(P^1)$ are ideal triangles. Then part (2) of Lemma~\ref{lemma:ideal-finite} implies that $N_6(P^1)$ contains at least $7$ ideal triangles. Denote the set of the faces of $P^1$ by $\mathcal{F}$ and the number of the ideal vertices of a face $f$ by $I(f)$. Suppose that $P^1$ contains at most $5$ ideal vertices. Then
$$
21 = 3 \cdot 7 \le \sum_{f \in \mathcal{F}} I(f) \le 4 \cdot 5 = 20.
$$
Thus, $P^1$ contains at least $6$ ideal vertices.

Since  $P^1$ is not an octahedron, then there is a face $f'$ that is not an ideal triangle. Therefore, $f' \not\in N_6(P^1)$, whence $f'$ has at most $5$ neighbouring faces. Then by part (3) of Lemma~\ref{lemma:ideal-finite} there is a face $f''$ which has at least $7$ neighbouring faces. Therefore, $f''$ is not an ideal triangle. However $f'' \in N_6(P^1)$, that contradicts to the assumption.  

\medskip
Now let $i \geq 2$ and assume that for some $i \ge 2$ each face from  $N_6\left(P^i\right)$ is an ideal triangle.
The polyhedron $P^{i}$ is the union of two copies of $P^{i-1}$ along the face $f^{i-1}$. Let $D^{i-1}$ be the set of faces of $P^{i-1}$ that have a common edge with $f^{i-1}$. Let $S^{i}$ denote the set of such faces of $P^{i}$ that contain a face from $D^{i-1}$. That is, $S^{i}$ consists of all of the new faces that appeared after the union of two copies of $P^{i-1}$ along $f^{i-1}$. By Theorem~\ref{theorem:Andreev}, each face of a right-angled polyhedron has at least $5$ neighbours. Hence each face from the set $S^{i}$ has at least $6$ neighbours and $S^i \subset N_6(P^i)$. Therefore, by our assumption, each face from $S^i$ is an ideal triangle. Then each face from $D^{i - 1}$ is a triangle with two ideal and one finite vertices. Moreover,  $f^{i - 1}$ is a face with an even number of vertices, such that its ideal and finite vertices alternate among themselves. Namely, if $f^{i-1}$ has $2k$ vertices, then there are $k$ ideal and $k$ finite vertices, and $k \geq 2$. 

Observe that there are at least $2$ ideal vertices in $P^{i - 1}$ that are not contained in the face $f^{i - 1}$. Indeed, since all faces from $D^{i-1}$ are triangles with $2$ ideal vertices, then $P^{i-1}$ has at least one ideal vertex $v$ that is not contained in the face $f^{i-1}$. Suppose that $v$ is the only such vertex. Then $v$ is incident to all of the vertices of $f^{i-1}$. If $k \geq 3$, then $v$ is a vertex of valency $2k$. The latter is impossible by Theorem~\ref{theorem:Andreev}. If $k = 2$ then $P^{i-1}$ is a quadrilateral pyramid with $5$ faces, while by Theorem~\ref{theorem:Andreev} every right-angled polyhedron has at least $6$ faces. Thus, $P^{i-1}$ should have at least $2$ ideal vertices that are not contained in the face $f^{i-1}$. 

Hence $P^i$ has at least $k + 4 \geq 6$ ideal vertices. Since $P^i$ is not an octahedron, then it has at least one face $f'$ that is not an ideal triangle. Using part (3) of Lemma~\ref{lemma:ideal-finite} we obtain that there is a face $f''$ of $P^i$ that has at least $7$ neighbours and thus cannot be an ideal triangle. This contradicts our assumption about $N_6\left(P^i\right)$, and the lemma is proved. 
\end{proof}

\end{document}